\let\mathbb=\varmathbb
\colorlet{MyBlue}{DodgerBlue!75!Black}
\colorlet{MyGreen}{DarkGreen!95!Black}
\numberwithin{equation}{section}  
\crefname{example}{Ex.}{Exs.}
\newcommand{\dd}{\:d}
\newcommand{\eps}{\varepsilon}
\newcommand{\dif}{\dd}
\DeclareMathOperator*{\argmin}{argmin}
\DeclareMathOperator*{\Argmin}{Argmin}
\DeclarePairedDelimiter{\ceil}{\lceil}{\rceil}
\DeclareMathOperator{\bd}{bd}
\DeclareMathOperator{\cl}{cl}
\DeclareMathOperator{\diag}{diag}
\DeclareMathOperator{\dom}{dom}
\DeclareMathOperator{\Int}{int}
\DeclareMathOperator{\image}{im}
\DeclareMathOperator{\tr}{tr}
\DeclareMathOperator{\Id}{Id}
\newcommand{\ct}{\mathtt{t}}
\newcommand{\bA}{{\mathbf A}}
\newcommand{\BB}{{\mathbf B}}
\newcommand{\bH}{\mathbf{H}}
\newcommand{\bI}{\mathbf{I}}
\newcommand{\bJ}{\mathbf{J}}
\newcommand{\XX}{\mathbf{X}}
\newcommand{\gbold}{\mathbf{g}}
\newcommand{\bS}{\mathbf{S}}
\newcommand{\bN}{\mathbf{N}}
\newcommand{\bZ}{\mathbf{Z}}
\newcommand{\bp}{\mathbf{p}}
\renewcommand{\iff}{\Leftrightarrow}
\renewcommand{\emptyset}{\varnothing}
\newcommand{\eqdef}{\triangleq}
\newcommand{\scrB}{\mathcal{B}}
\newcommand{\setE}{\mathsf{E}}
\newcommand{\scrF}{\mathcal{F}}
\newcommand{\scrH}{\mathcal{H}}
\newcommand{\setK}{\mathsf{K}}
\newcommand{\setL}{\mathsf{L}}
\newcommand{\scrP}{\mathcal{P}}
\newcommand{\scrT}{\mathcal{T}}
\newcommand{\scrW}{\mathcal{W}}
\newcommand{\setX}{\mathsf{X}}
\newcommand{\1}{\mathbf{1}}
\newcommand{\Rn}{\R^n}
\newcommand{\R}{\mathbb{R}}
\newcommand{\K}{\mathbb{K}}
\newcommand{\symm}{\mathbb{S}}
\newcommand{\ball}{\mathbb{B}}
\newcommand{\feas}{\mathsf{X}}							
\theoremstyle{plain}
\newtheorem{theorem}{Theorem}
\newtheorem*{corollary*}{Corollary}
\newtheorem{lemma}[theorem]{Lemma}
\newtheorem{proposition}[theorem]{Proposition}
\theoremstyle{definition}
\newtheorem{definition}[theorem]{Definition}
\newtheorem*{definition*}{Definition}
\newtheorem{assumption}{Assumption}
\newcommand{\close}{\hfill{\footnotesize$\Diamond$}}
\theoremstyle{remark}
\newtheorem{remark}{Remark}
\newtheorem*{remark*}{Remark}
\newtheorem*{notation*}{Notational remark}
\newtheorem{example}{Example}
\numberwithin{theorem}{section}
\numberwithin{remark}{section}
\numberwithin{example}{section}
\DeclarePairedDelimiter{\abs}{\lvert}{\rvert}
\DeclarePairedDelimiter{\inner}{\langle}{\rangle}
\DeclarePairedDelimiter{\norm}{\lVert}{\rVert}
\newacro{SC}[SC]{self-concordant}
\newacro{SCB}[SCB]{self-concordant barrier}
\newacro{SSB}[SSB]{self-scaled barrier}
\newacro{FOM}{First-order method}
\DeclareMathOperator{\AHBA}{\mathbf{AHBA}}
\DeclareMathOperator{\SAHBA}{\mathbf{SAHBA}}
\title{Hessian Barrier Algorithms for non-convex conic optimization}
\date{\today}
\author[1]{\small Pavel Dvurechensky}
\author[2]{\small Mathias Staudigl}
\affil[1]{\footnotesize Weierstrass Institute for Applied Analysis and Stochastics, Mohrenstr. 39, 10117 Berlin, Germany\\
(\href{mailto:pavel.Dvurechensky@wias-berlin.de}{pavel.dvurechensky@wias-berlin.de})}
\affil[2]{\footnotesize Maastricht University, Department of Advanced Computing Sciences, P.O. Box 616, NL\textendash 6200 MD Maastricht, The Netherlands\\
(\href{mailto:m.staudigl@maastrichtuniversity.nl}{m.staudigl@maastrichtuniversity.nl})}
\begin{document}
\maketitle
\begin{abstract}
%
A key problem in mathematical imaging, signal processing and computational statistics is the minimization of non-convex objective functions that may be non-differentiable at the boundary of the feasible set. This paper proposes a new family of first- and second-order interior-point methods for non-convex optimization problems with linear and conic constraints, combining logarithmically homogeneous barriers with quadratic and cubic regularization respectively. Our approach is based on a potential-reduction mechanism and, under the Lipschitz continuity of the corresponding derivative with respect to the local barrier-induced norm, attains a suitably defined class of approximate first- or second-order KKT points with worst-case iteration complexity $O(\eps^{-2})$ (first-order) and $O(\eps^{-3/2})$ (second-order), respectively. Based on these findings, we develop new path-following schemes attaining the same complexity, modulo adjusting constants. These complexity bounds are known to be optimal in the unconstrained case, and our work shows that they are upper bounds in the case with complicated constraints as well. 
To the best of our knowledge, this work is the first which achieves these worst-case complexity bounds under such weak conditions for general conic constrained non-convex optimization problems.

\end{abstract}

\renewcommand{\sharp}{\gamma}
\acresetall
\allowdisplaybreaks


\section{Introduction}
\label{sec:intro}
%
Let $\setE$ be a finite dimensional vector space with inner product $\inner{\cdot,\cdot}$ and norm $\norm{\cdot}$. In this paper we are concerned with solving constrained conic optimization problems of the form 
\begin{equation}\label{eq:Opt}\tag{Opt}
 \min_{x} f(x)\quad \text{s.t.: } \bA x=b,\; x\in\bar{\setK}.
\end{equation}
The main working assumption underlying our developments is as follows:
\begin{assumption}\label{ass:1}
\begin{enumerate}
\item $\bar{\setK}\subset\setE$ is a regular convex cone with nonempty interior $\setK$: $\bar{\setK}$ is closed convex, solid and pointed (i.e. contains no lines);
\item $\bA:\setE\to\R^{m}$ is a linear operator assigning each element $x\in\setE$ to a vector in $\R^m$ and having full rank\footnote{
Note that this assumption is not restrictive. If the linear operator maps a point $x$ to a lower-dimensional subset, then it is possible to eliminate redundant constraints, or we are working with an inconsistent system. The latter is excluded from our considerations, so in fact this assumption is without loss of generality. 
}
, i.e., $\image(\bA)=\R^{m}$, $b \in \R^{m}$; 
\item The feasible set $\bar{\setX}=\bar{\setK}\cap\setL$, where $\setL=\{x\in\setE\vert\bA x=b\}$, has nonempty relative interior denoted by $\setX=\setK\cap\setL$;
\item $f:\setE\to\R$ is possibly non-convex, continuous on $\bar{\setX}$ and continuously differentiable on $\setX$;
\item Problem \eqref{eq:Opt} admits a global solution. We let $f_{\min}(\setX)=\min\{f(x)\vert x\in\bar{\setX}\}$.
\end{enumerate}
\end{assumption}
\begin{example}[NLP with non-negativity constraints]
\label{ex:NN}
For $\setE=\Rn$ and $\bar{\setK}\equiv \bar{\setK}_{\text{NN}}=\Rn_{+}$ we recover non-linear programming problems with linear equality constraints and non-negativity constraints: $\bar{\setX}=\{x\in\Rn\vert \bA x=b,\text{ and } x_{i}\geq 0\text{ for all }i=1,\ldots,n\}.$
\close
\end{example}
\begin{example}[Optimization over the Second-Order Cone]
\label{ex:SOC}
Consider $\setE=\R^{n+1}$ and $\bar{\setK}\equiv \bar{\setK}_{\text{SOC}}=\{x=(x_{0},\underline{x})\in\R\times\R^{n-1}\vert x_{0}\geq\norm{\underline{x}}_2\}$, the second-order cone (SOC). In this case problem \eqref{eq:Opt} becomes a non-linear second-order conic optimization problem. Such problems have a huge number of applications, including energy systems \cite{Mol19}, network localization \cite{Tse07}, among many others \cite{AliGol03}. \close
\end{example}
\begin{example}[Semi-definite programming]
\label{ex:SDP}
If $\setE=\symm^{n}$ is the space of real symmetric $n\times n$ matrices and $\bar{\setK}\equiv\bar{\setK}_{\text{SDP}}=\symm^{n}_{+}$ is the cone of positive semi-definite matrices, we obtain a non-linear semi-definite programming problem. Endow this space with the standard inner product $\inner{a,b}=\tr(ab)$. In this case, the linear operator $\bA$ assigns a matrix $x\in\symm^{n}$ to a vector $\bA x = [\inner{a_{1},x}, \ldots, \inner{a_{m},x}]^\top$.
Such mathematical programs have received enormous attention due to the large number of applications in control theory, combinatorial optimization and engineering \cite{LauRen05,De-Klerk:2006aa,BenNem01}. \close
\end{example}
\subsection{Motivating applications}
\paragraph{Statistical estimation with non-convex regularization}
An important instance of \eqref{eq:Opt} is the composite optimization problem
\begin{equation}\label{eq:composite}
\min_{x}f(x)=\ell(x)+\lambda\sum_{i=1}^{n}\varphi(x_{i}^{p})\quad \text{s.t.: } x\in\bar{\setK}_{\text{NN}},
\end{equation}
where $\ell:\Rn\to\R$ is a smooth data fidelity function, $\varphi:\R\to\R$ is a convex function, $p\in(0,1)$, and $\lambda>0$ is a regularization parameter. A common use of this problem formulation is the regularized empirical risk-minimization problem in high-dimensional statistics, or the variational regularization technique in inverse problems. Common specifications for the regularizing function are $\varphi(s)=s$, or $\varphi(s)=s^{2/p}$. In the first case, we obtain 
$\sum_{i=1}^{n}\varphi(x^{p}_{i})=\sum_{i=1}^{n}x_{i}^{p}=\norm{x}^{p}_{p}$ on $\setK_{\text{NN}}$, whereas in the second case, we get $\sum_{i=1}^{n}\varphi(x^{p}_{i})=\sum_{i=1}^{n}x_{i}^{2}=\norm{x}^{2}_2$. Note that the first case yields the objective $f$ which is non-convex and non-differentiable at the boundary of the feasible set. It has been reported in imaging sciences that the use of such non-convex and non-differentiable regularizer has advantages in the restoration of piecewise constant images. \cite{BiaChe15} contains a nice survey of studies supporting this observation. Moreover, in variable selection, the $L_{p}$ penalty function with $p\in(0,1)$ owns the oracle property \cite{FanLi01} in statistics, while $L_{1}$ (called the LASSO) does not; problem \eqref{eq:composite} with $p\in(0,1)$ can be used for variable selection at the group and individual variable levels simultaneously, while the very same problem with $p=1$ can only work for individual variable selection \cite{HUANG:2009}. See \cite{GeJiaYe11,Chen:2014wx} for a complexity-theoretic analysis of this problem. 

\paragraph{Low rank matrix recovery}
Similar to the composite minimization problem \eqref{eq:composite}, there are many relevant optimization problems defined on matrix domains $\setE=\symm^{n}$, which are of the similar form, but now defined over a feasible set of the form $\bar{\setX}=\{x\in\setE\vert\bA x=b,x\in\bar{\setK}_{\text{SDP}}\}$. In particular, let us consider the composite model $f(x)=\ell(x)+r(x)$, with smooth loss function $\ell:\setE\to\R$, and with regularizer given in form of a matrix function $r(x)=\sum_{i}\sigma_{i}(x)^{p}$ on $x\in\setK_{\text{SDP}}$, where $p\in(0,1)$ and $\sigma_{i}(x)$ is the $i$-th singular value of the matrix $x$. The resulting optimization problem is a matrix-version of the non-convex regularized problem \eqref{eq:composite}. The use of the non-convex Schatten regularizer has received quite some attention because its favorable properties to promote sparse solutions. In particular, \cite{JiSoSzeYe13} used this approach to solve large-scale network localization problems with a potential reduction method based on a trust-region approach. Another application fitting into the above framework is the task to recover a low rank matrix $X\in\bar{\setK}_{\text{SDP}}$ from measurements $\scrP(x)=d\in\R^{m}$. To solve this problem an attractive formulation is to minimize $f(x)=\norm{\scrP(x)-d}^{2}+r(x)$, with $r(x)$ a $p$-Schatten norm for $p\in(0,1)$. See \cite{SurveyNonconvex} for a recent survey.

\subsection{Challenges and contribution.}
One of the challenges to approach problem \eqref{eq:Opt} algorithmically is to deal with the feasible set $\setL\cap\bar{\setK}$. A projection-based approach faces the computational bottleneck to project onto the intersection of a cone with an affine set, which makes the majority of the existing first-order \cite{GhaLan16,nesterov2020primal-dual,guminov2021combination,carmon2017convex,agarwal2017finding,cartis2019optimality} and second-order \cite{NesPol06,conn2000trust,cartis2012complexity,CurRobSam17,CarGouToi12,birgin2017regularization,CarGouToi18,cartis2019optimality,doikov2021minimizing} methods practically less attractive, as they either are designed for unconstrained problems or use proximal steps in the updates. 
When primal feasibility is not a major concern, augmented Lagrangian algorithms \cite{birgin2017complexity,grapiglia2020complexity,Andreani:2019uf} are an alternative, though they do not always come with complexity guarantees.
%
%
These observations motivate us to focus on primal barrier-penalty methods that allow to decompose the feasible set and treat $\bar{\setK}$ and $\setL$ separately.
Barrier methods are classical and powerful for convex optimization in the form of interior-point methods, but the results in the non-convex setting are in a sense fragmentary, with many different algorithms existing for different particular instantiations of \eqref{eq:Opt}. In particular, the main focus of barrier methods for non-convex optimization has been on particular cases, such as non-negativity constraints \cite{Ye92,TseBomSch11,HBA-linear,BiaCheYe15,HaeLiuYe18,NeiWr20} and quadratic programming \cite{Ye92,FayLu06,LuYua07}. 
In this paper we develop a flexible and unifying algorithmic framework that is able to provide first- and second-order interior-point algorithms for \eqref{eq:Opt} with potentially non-convex objective functions, potentially non-differentiable on the boundary, and general conic constraints. To the best of our knowledge, our method is the first one providing complexity results for first- and second-order algorithms to reach approximate first- and second-order KKT points, respectively, under such weak assumptions. 
\paragraph{Our approach.}
At the core of our approach is the assumption that the cone $\bar{\setK}$ admits a \emph{logarithmically homogeneous self-concordant barrier} (LHSCB) $h(x)$ (\cite{NesNem94}, cf. Definition \ref{def:LHSCB}), for which we can retrieve information about the function value $h(x)$, the gradient $\nabla h(x)$ and the Hessian $H(x)=\nabla^{2}h(x)$ with relative ease. This is not a very restrictive assumption, since all standard conic restrictions in optimization (i.e. $\bar{\setK}_{\text{NN}},\bar{\setK}_{\text{SOC}}$ and $\bar{\setK}_{\text{SDP}}$) have this property. Using this barrier, our algorithms are designed to reduce the \emph{potential function}
\begin{equation}\label{eq:potential}
F_{\mu}(x)=f(x)+\mu h(x),
\end{equation}
where $\mu>0$ is a (typically) small penalty parameter. By definition, the domain of the potential function $F_{\mu}$ is the interior of the cone $\bar{\setK}$. Therefore, any algorithm designed to reduce the potential will automatically respect the conic constraints, and the satisfaction of the linear constraints $\setL$ can be ensured by choosing search directions from the nullspace of the linear operator $\bA$. Our target is to identify points satisfying approximate necessary first- and second-order optimality conditions for problem \eqref{eq:Opt} expressed in terms of $\eps$-KKT and $(\eps_1,\eps_2)$-2KKT points respectively (cf. Section \ref{sec:Optimality} for a precise definition).

\paragraph{Approaching first-order stationary points.}
To produce a first-order stationary point, we construct a novel gradient-based method, which we call the \emph{adaptive Hessian barrier algorithm} ($\AHBA$, Algorithm \ref{alg:AHBA}). The main computational steps involved in $\AHBA$ is the identification of a search direction and a step size policy, guaranteeing feasibility and sufficient decrease in the potential function value. To find a step direction, we employ a linear model for $F_{\mu}$ regularized by the squared local norm induced by the Hessian of $h$ which is then minimized over the tangent space of the affine set $\setL$. The step-size is adaptively chosen to ensure feasibility and sufficient decrease in the objective function value $f$. For a judiciously chosen value of $\mu$, we prove that this gradient-based method enjoys the upper iteration complexity bound $O(\eps^{-2})$ for reaching an $\eps$-KKT point when a ``descent Lemma'' holds relative to the local norm induced by the Hessian of $h$ (cf. Assumption \ref{ass:gradLip} and Theorem \ref{Th:AHBA_conv} in Section \ref{sec:firstorder}). We then embed $\AHBA$ into a path-following scheme that iteratively reduces the value of $\mu$ making the algorithm parameter-free and any-time convergent with the $O(\eps^{-2})$ complexity.

\paragraph{Approaching second-order stationary points.}
We next move on to derive a second-order method called the \emph{second-order adaptive Hessian barrier algorithm} ($\SAHBA$, Algorithm \ref{alg:SOAHBA}). Under this approach the step direction is determined by a minimization subproblem over the same tangent space. 
But, in this case, the minimized model is composed of the linear model for $F_{\mu}$ augmented by second-order term for $f$ and regularized by the cube of the local norm induced by the Hessian of $h$. 
The regularization parameter is chosen adaptively to allow for potentially larger steps in the areas of small curvature.
For a judiciously chosen value of $\mu$, we establish (see Theorem \ref{Th:SOAHBA_conv}) the worst-case bound $O(\max\{\eps_1^{-3/2},\eps_2^{-3/2}\})$ on the number of iterations for reaching an $(\eps_1,\eps_2)$-2KKT point, under a weaker assumption that the Hessian of $f$ is Lipschitz relative to the local norm induced by the Hessian of $h$ (see Assumption \ref{ass:2ndorder} in Section \ref{sec:secondorder} for a precise definition). We then propose a path-following version of $\SAHBA$ that iteratively reduces the value of $\mu$ making the algorithm parameter-free and any-time convergent with $O(\max\{\eps_1^{-3/2},\eps_2^{-3/2}\})$ complexity.

\subsection{Related work}
\label{sec:related}
To the best of our knowledge, $\AHBA$ and $\SAHBA$ are the first interior-point algorithms that achieve such complexity bounds universally for the general non-convex problem template \eqref{eq:Opt}. Our closest algorithmic and complexity-theoretic competitors are \cite{HaeLiuYe18,NeiWr20}. Both papers focus on the special case of non-negativity constraints as in Example \ref{ex:NN} and fix $\mu$ before the start of the algorithm based on the desired accuracy $\eps$, which may require some hyperparameter tuning in practice and may not work if the desired accuracy is not yet known. Interestingly, for the special case $\bar{\setK}=\bar{\setK}_{\text{NN}}$, our general algorithms provide stronger results under weaker assumptions, compared to first- and second-order methods in \cite{HaeLiuYe18} and first-order implementation of the second-order method in \cite{NeiWr20} (cf. Sections \ref{sec:FO_discussion} and \ref{sec:SO_discussion}).
\paragraph{First-order methods.} In the unconstrained setting, when the gradient is Lipschitz continuous, the standard gradient descent achieves the lower iteration complexity bound $O(\eps^{-2})$ to find a first-order $\eps$-stationary point $\hat{x}$ such that $\norm{\nabla f(\hat{x})}_2 \leqslant\eps$  \cite{Nes18,CarDucHinSid19b,CarDucHinSid19}. 
Notably, despite problem \eqref{eq:Opt} has non-trivial constraints, our bound for $\AHBA$ matches this bound.
The original motivation for our work comes from the paper \cite{HBA-linear} on Hessian Barrier Algorithms, which in turn was strongly influenced by the continuous-time techniques of \cite{ABB04,BolTeb03}. Our results include second-order method and general conic constraints and hold far beyond the realm of \cite{HBA-linear}, where the complexity result is proved only for first-order method in the setting of non-negativity constraints and quadratic objective.
\paragraph{Second-order methods.}
In unconstrained optimization with Lipschitz continuous Hessian, cubic-regularized Newton methods \cite{Gri81,NesPol06} and second-order trust region algorithms \cite{conn2000trust,CarGouToi12,CurRobSam17} achieve the lower iteration complexity bound $O(\max\{\eps_1^{-3/2},\eps_2^{-3/2}\})$ \cite{CarDucHinSid19b,CarDucHinSid19} to find  a second-order $(\eps_1,\eps_2)$-stationary point; I.e. a point $\hat{x}$ satisfying $\norm{\nabla f(\hat{x})}\leq \eps_1$ and $\lambda_{\min} \left(\nabla^2 f(\hat{x})\right) \geq - \sqrt{\eps_2}$, where $\lambda_{\min}(\cdot)$ denotes the minimal eigenvalue of a matrix\footnote{A number of works, e.g. \cite{CarGouToi12,NeiWr20}, consider an $(\eps_1,\eps_2)$-stationary point defined as $\hat{x}$ such that $\|\nabla f(\hat{x}) \|_2 \leq \eps_1$ and $\lambda_{\min} \left(\nabla^2 f(\hat{x})\right) \geq - \eps_2$ and the corresponding complexity $O(\max\{\eps_1^{-3/2},\eps_2^{-3}\})$. Our definition and complexity bound are the same up to redefinition of $\eps_2$.}. 
Notably, despite problem \eqref{eq:Opt} has non-trivial constraints, our bound for $\SAHBA$ matches this bound.
The existing literature on non-convex problems with non-linear constraints either consider only equality constraints \cite{curtis2018complexity}, or only inequality constraints \cite{hinder2018worst-case}, or both, but require projection \cite{cartis2019optimality}. Moreover, they do not consider general conic constraints as in \eqref{eq:Opt}.
\paragraph{Approximate optimality conditions.}
\cite{BiaCheYe15} consider box-constrained minimization of the same objective as in \eqref{eq:composite} and propose a notion of $\eps$ scaled KKT points. 
Their definition is tailored to the geometry of the optimization problem, mimicking the complementarity slackness condition of the classical KKT theorem for the non-negative orthant. 
In particular, their first-order condition consist of feasibility of $x$ along with a scaled gradient condition.
\cite{HaeLiuYe18,NeiWr20} point out that without additional assumptions on $f$, points that satisfy the scaled gradient condition may not approach KKT points as $\eps$ decreases. 
Thus, \cite{HaeLiuYe18,NeiWr20}, provide alternative notions of approximate first- and second-order KKT conditions for the setting of non-negativity constraints. 
Inspired by \cite{HaeLiuYe18}, we define the corresponding notions for general cones. Our first-order conditions turn out to be stronger than that of  \cite{HaeLiuYe18,NeiWr20} and the second-order condition is equivalent to theirs in the particular case of non-negativity constraints (see Sections \ref{sec:FO_discussion} and \ref{sec:SO_discussion}). The proof that our algorithms are guaranteed to find such approximate KKT points requires some fine analysis exploiting the structural properties of logarithmically homogeneous barriers attached to the cone $\setK$, which, to the best of our knowledge, appear to be novel. 
 

\subsection{Notation}
 In what follows $\setE$ denotes a finite-dimensional real vector space, and $\setE^{\ast}$ the dual space, which is formed by all linear functions on $\setE$. The value of $s\in\setE^{\ast}$ at $x\in\setE$ is denoted by $\inner{s,x}$. In the particular case where $\setE=\Rn$, we have $\setE=\setE^{\ast}$. Important elements of the dual space are gradients of differentiable functions $f:\setE\to\R$, denoted as $\nabla f(x)\in\setE^{\ast}$. For an operator $\bH:\setE\to\setE^{\ast}$, denote by $\bH^{\ast}$ is adjoint operator, defined by the identity 
 \[
(\forall u,v\in\setE): \qquad \inner{\bH u,v}:=\inner{u,\bH^{\ast}v}.
 \]
Thus, $\bH^{\ast}:\setE\to\setE^{\ast}$. It is called self-adjoint if $\bH=\bH^{\ast}$. We use $\lambda_{\max}(\bH)/\lambda_{\min}(\bH)$ to denote the maximum/minimum eigenvalue of such operators. Important examples of such self-adjoint operators are Hessians of twice differentiable functions $f:\setE\to\R$: 
\[
(\forall u,v\in\setE):\qquad \inner{\nabla^{2}f(x)u,v}=\inner{u,\nabla^{2}f(x)v}.
\]
Operator $\bH:\setE\to\setE^{\ast}$ is positive semi-definite if $\inner{\bH u,u}\geq 0$ for all $ u\in\setE$. If the inequality is always strict for non-zero $u$, then $\bH$ is called positive definite. These attributes are denoted as $\bH\succeq 0$ and $\bH\succ 0$, respectively.
By fixing a positive definite self-adjoint operator $\bH:\setE\to\setE^{\ast}$, we can define the following Euclidean norms
\[
\norm{u}=\inner{\bH u,u}^{1/2},\quad \norm{s}^{\ast}=\inner{s,\bH^{-1}s}^{1/2}\quad u\in\setE,s\in\setE^{\ast}.
\]
If $\setE=\Rn$, then $\bH$ is usually taken as the identity matrix $\bH=\bI$. 
The directional derivative of function $f$ is defined in the usual way:
\[
Df(x)[v]:=\lim_{\eps\to 0+} \frac{1}{\eps}[f(x+\eps v)-f(x)].
\]
More generally, for $v_{1},\ldots,v_{p}\in\setE$, we define $D^{p}f(x)[v_{1},\ldots,v_{p}]$ the $p$-th directional derivative at $x$ along directions $v_{i}\in\setE$. In that way we define $\nabla f(x)\in\setE^{\ast}$ by $Df(x)[u]=\inner{\nabla f(x),u}$ and the Hessian $\nabla^{2}f(x):\setE\to\setE^{\ast}$ by $\inner{\nabla^{2}f(x)u,v}=D^{2}f(x)[u,v]$. 
We denote $\setL_{0}=\{ v \in \setE \vert \bA v = 0\}$ the tangent space associated with the linear subspace $\setL\subset\setE$.

\section{Prelminiaries}
\label{sec:prelims}
%
\subsection{Cones and their self-concordant barriers}
Let $\bar{\setK}\subset\setE$ be a \emph{regular cone}: $\bar{\setK}$ is closed convex, solid and pointed (i.e. contains no lines). 
We assume that $\setK:=\Int(\bar{\setK}) \neq \emptyset$, where $\Int(\bar{\setK})$ is the interior of $\bar{\setK}$. Any such cone admits a self-concordant logarithmically homogeneous barrier $h(x)$ with finite parameter value $\nu$ \cite{NesNem94}.

\begin{definition}
\label{def:LHSCB}
A function $h:\bar{\setK}\to(-\infty,\infty]$ with $\dom h=\setK$ is called a $\nu$-\emph{logarithmically homogeneous self-concordant barrier} ($\nu$-LHSCB) for the cone $\bar{\setK}$ if:
\begin{itemize}
\item[(a)] $h$ is a $\nu$-\emph{self-concordant barrier} for $\bar{\setK}$, i.e., for all $x \in \setK$ and $u\in \setE$
\begin{align}
\label{eq:SC}
&\abs{D^{3}h(x)[u,u,u]}\leq 2D^{2}h(x)[u,u]^{3/2},\text{ and } \\
&\sup_{u\in\setE}\abs{2 Dh(x)[u]-D^{2}h(x)[u,u]}\leq \nu.
\end{align}
\item[(b)] $h$ is \emph{logarithmically homogeneous:} 
\[
h(tx)=h(x)-\nu\ln(t)\qquad \forall x\in \setK,t>0.
\]
\end{itemize}
We denote the set of $\nu$-logarithmically homogeneous barriers by $\scrH_{\nu}(\setK)$.
\end{definition}
Given $h\in\scrH_{\nu}(\setK)$, from \cite[Thm 5.1.3]{Nes18} we know that for any $\bar{x}\in\bd(\bar{\setK})$, any sequence $(x_{k})_{k \geq 0}$ with $x_{k}\in\ \setK$ and $\lim_{k\to\infty}x_{k}=\bar{x}$ satisfies $\lim_{k\to\infty}h(x_{k})=+\infty$. For a pointed cone $\bar{\setK}$, we have $\nu\geq 1$ and the Hessian $H(x)\eqdef\nabla^{2}h(x):\setE\to\setE^{\ast}$ is a positive definite linear operator defined by $\inner{H(x)u,v}\eqdef D^{2}h(x)[u,v]$ for all $u,v\in\setE$, see  \cite[Thm. 5.1.6]{Nes18}. The Hessian gives rise to a \emph{local norm} 
\begin{equation}\label{eq:localnorm}
(\forall x\in \setK)(\forall u\in\setE):\quad \norm{u}_{x}=\inner{H(x)u,u}^{1/2}.
\end{equation}
We also define a \emph{dual norm} on $\setE^{\ast}$ as 
\begin{equation}\label{eq:dualnorm}
(\forall x\in\setK)(\forall s\in\setE^{\ast}):\quad \norm{s}_{x}^{\ast}= \inner{[H(x)]^{-1}s,s}^{1/2}.
\end{equation}
The \emph{Dikin ellipsoid} is defined as the open set
$\scrW(x;r)\eqdef\{u\in\setE\vert\;\norm{u-x}_{x}<r\}, r>0.$ The usage of the local norm adapts the unit ball to the local geometry of the set $\setK$. Indeed, the following classical result is key to the development of our methods. 
\begin{lemma}[Theorem 5.1.5 \cite{Nes18}]
\label{lem:Dikin}
For all $x\in\setK$ we have $\scrW(x;1)\subseteq\setK$.
\end{lemma}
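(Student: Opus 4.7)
The plan is to prove the inclusion by contradiction: suppose there exists $u \in \scrW(x;1)$, i.e.\ $r \eqdef \norm{u-x}_{x} < 1$, and yet $u \notin \Int(\setK)$. Since $x \in \Int(\setK)$ and the segment $\{x + t(u-x) : t \in [0,1]\}$ starts in the interior, there must exist a smallest $T \in (0,1]$ such that $\bar{x} \eqdef x + T(u-x) \in \bd(\setK)$, while $x + t(u-x) \in \Int(\setK)$ for all $t \in [0,T)$. The goal is to derive a contradiction with the barrier property recalled just after the definition: any sequence in $\Int(\setK)$ converging to $\bar{x} \in \bd(\setK)$ must send $h$ to $+\infty$.

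The core idea is to pass to the one-dimensional self-concordant function $\varphi(t) \eqdef h(x + t(u-x))$ defined on $[0,T)$ and to use the self-concordance inequality \eqref{eq:SC} restricted to the direction $v \eqdef u-x$. By construction $\varphi''(0) = \inner{H(x)v,v} = r^{2} < 1$, and the self-concordance inequality gives $\abs{\varphi'''(t)} \le 2 \varphi''(t)^{3/2}$ on $[0,T)$. First I would introduce the auxiliary function $g(t) \eqdef \varphi''(t)^{-1/2}$ and observe that $\abs{g'(t)} = \tfrac{1}{2}\varphi''(t)^{-3/2}\abs{\varphi'''(t)} \le 1$, so $g$ is $1$-Lipschitz. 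Combined with $g(0) = 1/r$, this yields
\begin{equation*}
\varphi''(t) \;\le\; \frac{1}{(1/r - t)^{2}} \qquad \text{for all } t \in [0, \min(T, 1/r)).
\end{equation*}

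Since $r < 1$ implies $1/r > 1 \ge T$, this bound is valid on the entire interval $[0,T)$, and in particular $\varphi''$ stays uniformly bounded on any $[0, T']$ with $T' < T$. Integrating twice then bounds $\varphi$ itself on $[0,T)$: concretely, $\varphi'(t) = \varphi'(0) + \int_{0}^{t}\varphi''(s)\dd s$ remains bounded as $t \uparrow T$ because the integrand is bounded by $(1/r-s)^{-2}$, whose integral from $0$ to $T \le 1 < 1/r$ is finite; a second integration shows $\limsup_{t \uparrow T}\varphi(t) < +\infty$. On the other hand, as $t \uparrow T$, $x + t(u-x) \to \bar{x} \in \bd(\setK)$ through points of $\Int(\setK)$, so the barrier property forces $\varphi(t) \to +\infty$. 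This is the required contradiction, hence no such $u$ exists and $\scrW(x;1) \subseteq \Int(\setK)$.

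The only delicate step is the Lipschitz estimate on $g = (\varphi'')^{-1/2}$ and the bookkeeping that ties the gap $1 - r > 0$ in the local-norm assumption to the fact that the segment cannot hit the boundary before $t = 1$; the rest is bounded elementary integration plus the blow-up of the barrier at $\bd(\setK)$, which was already established earlier in the section via \cite[Thm 5.1.3]{Nes18}.
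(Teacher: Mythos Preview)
Your proof is correct and is essentially the standard argument from Nesterov's book (the reference cited in the lemma statement). The paper itself does not supply a proof of this lemma; it simply quotes the result as Theorem~5.1.5 of \cite{Nes18}, so there is no in-paper proof to compare against, and your argument is precisely the one found there.

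One minor bookkeeping point: the paper's self-concordance inequality \eqref{eq:SC} is stated without the customary factor~$2$, i.e.\ $\abs{D^{3}h(x)[u,u,u]}\le D^{2}h(x)[u,u]^{3/2}$, whereas you invoke $\abs{\varphi'''(t)}\le 2\varphi''(t)^{3/2}$. With the paper's normalization you would actually obtain $\abs{g'(t)}\le 1/2$ rather than $\le 1$, which only strengthens your Lipschitz estimate on $g$; the remainder of the argument is unaffected.
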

\begin{proposition}[{Theorem 5.1.9 \cite{Nes18}}]
\label{prop:SCF_upper_bound}
Let $h\in\scrH_{\nu}(\setK)$, $x\in\dom h$, and a fixed direction $d \in \setE$. For all $t \in [0,\frac{1}{\norm{d}_x})$, with the convention that $\frac{1}{\norm{d}_{x}}=+\infty$ if $\norm{d}_x=0$, we have:
\begin{equation}
\label{eq:SCF_upper_bound} 
h(x + t d) \leq h(x) + t\inner{\nabla h(x),d} + t^2 \norm{d}_{x}^2 \omega(t \norm{d}_{x}),
\end{equation}
where $\omega(t)=\frac{-t-\ln(1-t)}{t^2}$.
\end{proposition}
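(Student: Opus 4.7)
The plan is to reduce to one dimension and invoke the standard self‑concordance ODE argument, then integrate twice and recognize the resulting expression as $(t\|d\|_x)^2\omega(t\|d\|_x)$.

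First I would define the univariate restriction $\phi(t)\eqdef h(x+td)$. By Lemma~\ref{lem:Dikin}, $x+td\in\Int(\setK)$ at least for $t\in[0,1/\|d\|_x)$, so $\phi$ is well defined and smooth on this interval. A direct computation yields $\phi'(0)=\inner{\nabla h(x),d}$ and $\phi''(t)=\inner{H(x+td)d,d}$, in particular $\phi''(0)=\|d\|_x^2$. The self‑concordance inequality \eqref{eq:SC} applied along the line $x+td$ gives the pointwise bound $|\phi'''(t)|\leq 2\,\phi''(t)^{3/2}$ on the interval of definition.

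Next I would derive the key differential inequality on $\phi''$. Define $\psi(t)\eqdef [\phi''(t)]^{-1/2}$ (this is well‑defined since $H$ is positive definite on $\Int(\setK)$ under pointedness, cf.\ the discussion following the definition of $\scrH_\nu(\setK)$). A short calculation shows $\psi'(t)=-\tfrac12\phi''(t)^{-3/2}\phi'''(t)$, so the self‑concordance bound yields $|\psi'(t)|\leq 1$. Integrating from $0$ to $t$ gives $\psi(t)\geq \psi(0)-t=\tfrac1{\|d\|_x}-t$, which, after squaring and inverting, produces the pointwise estimate
\begin{equation*}
\phi''(r)\leq \frac{\|d\|_x^2}{(1-r\|d\|_x)^2}\qquad\forall r\in[0,1/\|d\|_x).
\end{equation*}

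Then I would integrate twice. Using the identity $\phi(t)-\phi(0)-t\phi'(0)=\int_0^t(t-r)\phi''(r)\,dr$, the estimate above gives
\begin{equation*}
\phi(t)-\phi(0)-t\phi'(0)\leq \int_0^t(t-r)\frac{\|d\|_x^2}{(1-r\|d\|_x)^2}\,dr,
\end{equation*}
and an elementary substitution $u=1-r\|d\|_x$ evaluates the right‑hand side to $-t\|d\|_x-\ln(1-t\|d\|_x)$. Recognizing this expression as $(t\|d\|_x)^2\omega(t\|d\|_x)$ using the definition $\omega(\tau)=\tfrac{-\tau-\ln(1-\tau)}{\tau^2}$ and unwinding $\phi(t)=h(x+td)$, $\phi(0)=h(x)$, $\phi'(0)=\inner{\nabla h(x),d}$ yields \eqref{eq:SCF_upper_bound}.

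Finally I would address the degenerate case $\|d\|_x=0$: positive definiteness of $H(x)$ forces $d=0$, so \eqref{eq:SCF_upper_bound} reduces to the trivial equality $h(x)\leq h(x)$ on $[0,+\infty)$, consistent with the stated convention. There is no real obstacle in this proof; the only place that requires care is verifying that the primitive $-\tau-\ln(1-\tau)$ matches $\tau^2\omega(\tau)$, which is immediate from the definition of $\omega$, and ensuring that the one‑dimensional self‑concordance bound on $|\phi'''|$ is applied with the correct normalization constant $2$ coming from \eqref{eq:SC}.
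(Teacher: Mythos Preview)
The paper does not prove this proposition; it simply cites Theorem 5.1.9 of \cite{Nes18}. Your argument is exactly the standard one underlying that theorem: restrict to the line, use self-concordance to bound $\phi''$ via the auxiliary function $\psi=(\phi'')^{-1/2}$, and integrate twice to recover $-\tau-\ln(1-\tau)=\tau^{2}\omega(\tau)$ with $\tau=t\|d\|_x$. The computation of the double integral and the handling of the degenerate case $\|d\|_x=0$ are both correct.

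One minor caution: you appeal to \eqref{eq:SC} for the factor $2$ in the one-dimensional bound $|\phi'''|\leq 2(\phi'')^{3/2}$, but as written in this paper \eqref{eq:SC} omits the factor $2$. Nesterov's Definition 5.1.1 in \cite{Nes18} does carry the factor $2$, and the stated bound \eqref{eq:SCF_upper_bound} (which is Theorem 5.1.9 there) requires it; so the omission in \eqref{eq:SC} is a typo in the present paper rather than an error in your proof. Just be aware that invoking \eqref{eq:SC} verbatim would give $|\psi'|\leq 1/2$ and a strictly better (but different) bound, so it is cleaner to cite Nesterov's definition directly.
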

\noindent
We will also use the following inequality for the function $\omega(t)$ \cite[Lemma 5.1.5]{Nes18}:
\begin{equation}\label{eq:omega_upper_bound}
\omega(t) \leq \frac{1}{2(1-t)}, \; t \in [0,1).
\end{equation}
We close this section with important examples of conic domains to which our method can be applied. 
\begin{example}[The exponential cone]
Consider the exponential cone studied by \cite{Chares:2009wb} defined as 
\begin{equation}
\setK_{\exp}=\{x\in\R^{3}\vert x_{1}\geq x_{2}e^{x_{3}/x_{2}},x_{2}>0\}
\end{equation}
with closure $\bar{\setK}_{\exp}=\cl(\setK_{\exp})$. This set admits a $3$-LHSB 
\[
h(x_{1},x_{2},x_{3})\eqdef -\ln(x_{2}\ln(x_{1}/x_{2})-x_{3})-\ln(x_{1})-\ln(x_{2})\in\scrH_{3}(\setK_{\exp}).
\]
We remark that this cone is not self-dual (cf. Definition \ref{def:SymCone}), but 
\[
G\bar{\setK}_{\exp}=\bar{\setK}^{\ast}_{\exp}=\cl\left(\{y\in\R^{3}\vert y_{1}\geq -y_{3}e^{y_{2}/y_{3}-1},y_{1}>0,y_{3}<0\}\right),
\]
under the linear transformation 
$
G=\left[\begin{array}{ccc} 
1/e & 0 & 0 \\
0 & 0 & -1 \\
0 & -1 & 0 
\end{array}\right].
$
There are many convex sets that can be represented using the exponential cone; We list some example below, but refer to the PhD thesis \cite{Chares:2009wb} for further details. 
\begin{itemize}
\item Exponential: $\{(t,u)\vert t\geq e^{u}\}\iff (t,1,u)\in\bar{\setK}_{\exp}$;
\item Logarithm: $\{(t,u)\vert t\leq \ln(u)\}\iff (u,1,t)\in\bar{\setK}_{\exp}$;
\item Entropy: $t\leq -u\ln(u)\iff t\leq u\ln(1/u)\iff (1,u,t)\in\bar{\setK}_{\exp}$;
\item Relative Entropy: $t\geq u\log(u/w)\iff (w,u,t)\in\bar{\setK}_{\exp}$;
\item Softplus function: $t\geq\ln(1+e^{u})\iff a+b\leq 1,(a,1,u-t)\in\bar{\setK}_{\exp},(b,1,-t)\in\bar{\setK}_{\exp}$.  
\end{itemize}
\close
\end{example}
\begin{example}[Non-negativity constraints]
\label{Ex:NLP}
For $\setE=\Rn$ and $\bar{\setK}=\bar{\setK}_{\text{NN}}$, we define the log-barrier $h(x)=-\sum_{i=1}^{n}\ln(x_{i})$ for all $x\in\setK_{\text{NN}}=\Rn_{++}$. It is readily seen that $h\in\scrH_{n}(\setK)$. \close
\end{example}
\begin{example}[SOC constraints]
Let $\setE=\R^{n+1}$ and $\bar{\setK}=\bar{\setK}_{\text{SOC}}$ defined in Example \ref{ex:SOC}. For $x=(x_{0},\underline{x})\in\bar{\setK}_{\text{SOC}}$, we define the barrier $h(x)=-\ln(x_{0}^{2}-\underline{x}^{\top}\underline{x})$. It is well known that $h\in\scrH_{2}(\setK_{\text{SOC}})$ \cite{NesNem94}. 
\close
\end{example}
\begin{example}[SDP constraints]
Let $\setE=\symm^{n}$ and $\bar{\setK}=\bar{\setK}_{\text{SDP}}$, defined in Example \ref{ex:SDP}. Consider the barrier $h(x)=-\ln\det(x)$. It is well known that $h\in\scrH_{n}(\setK_{\text{SDP}})$.
\close
\end{example}

\subsection{Exploiting the structure of Symmetric Cones}
Nesterov and Todd \cite{NesTod97} introduced \emph{self-scaled barriers}, which later have been realized as LHSCB's for \emph{symmetric cones}. Such barriers are nowadays key to define primal-dual interior point methods for convex problems with potentially larger step sizes. Our method can also exploit the additional properties of self-scaled barriers, leading to potentially larger step sizes and faster convergence in our non-convex setting as well. For a given closed convex nonempty cone $\bar{\setK}$, its \emph{dual cone} is the closed convex and nonempty conce $\bar{\setK}^{\ast}$ defined as $\bar{\setK}^{\ast}\eqdef\{s\in\setE^{\ast}\vert\inner{s,x}\geq 0\;\forall x\in\bar{\setK}\}$. If $h\in\scrH_{\nu}(\setK)$, then the \emph{dual barrier} is defined $h_{\ast}(s)\eqdef\sup_{x\in\setK}\{\inner{-s,x}-h(x)\}$ for $s\in\bar{\setK}^{\ast}$.
\begin{definition}\label{def:SymCone}
An open convex cone is said to be \emph{self-dual} if $\setK^{\ast}=\setK$. $\setK$ is homogeneous if for all $x,y\in\setK$ there exists a linear bijection $G:\setE\to\setE$ such that $Gx=y$ and $G\setK=\setK$. An open convex cone $\setK$ is called \emph{symmetric} if it is self-dual and homogeneous. 
\end{definition}
The class of symmetric cones can be characterized within the language of Euclidean Jordan algebras \cite{FayLu06,Fay08,FarKor94,Schmieta2003}. For optimization, the three symmetric cones of most relevance are $\bar{\setK}_{\text{NN}},\bar{\setK}_{\text{SOC}}$ and $\bar{\setK}_{\text{SDP}}$. 
\begin{definition}[\cite{NesTod97}]
$h\in\scrH_{\nu}(\setK)$ is a $\nu$-\emph{self-scaled barrier} ($\nu$-SSB) if for all $x,w\in\setK$ we have $H(w)x\in\setK$ and $h_{\ast}(H(w)x)=h(x)-2h(w)-\nu$. Let $\scrB_{\nu}(\setK)$ denote the class of $\nu$-SSBs.  
\end{definition}
We emphasize that $\scrB_{\nu}(\setK)\subset\scrH_{\nu}(\setK)$. \cite{HauGul02} showed that every symmetric cone admits a $\nu$-SSB for some $\nu\geq 1$, while a characterization of the barrier parameter $\nu$ has been obtained in \cite{GulTun98}. The main advantage of working with SSB's instead of LHSCB's is that we can make potentially longer steps in the interior of the cone $\setK$ towards the direction of its boundary. Let $x \in\setK$ and $d \in \setE$. Denote
\begin{equation}
\label{eq:sigma_def}
\sigma_x(d):= (\sup \{ t: x -t d \in \setK\})^{-1}
\end{equation}
Since $\scrW(x;1) \subseteq\setK$ for all $x\in\setK$, we have that $\sigma_x(d) \leq \norm{d}_{x}$ and $\sigma_{x}(-d)\leq\norm{d}_{x}$ for all $d\in\setE$. Therefore $[0,\frac{1}{\norm{d}_{x}})\subseteq [0,\frac{1}{\sigma_{x}(d)})$. Hence, if the scalar quantity $\sigma_{x}(d)$ can be computed efficiently, it would allow us to make a larger step without violating feasibility.
\begin{example}
For $\bar{\setK}=\bar{\setK}_{\text{NN}}$, to guarantee $x-td\in\setK_{\text{NN}}$, we need $x_{i}-t d_{i}>0$ for all $i\in\{1,\ldots,n\}$. Hence, if $d_{i}\leq 0$, this is satisfied for all $t\geq 0$. If $d_{i}>0$, we obtain the restriction $t\leq \frac{x_{i}}{d_{i}}$. Hence, it follows that $\sigma_{x}(-d)=\max\{\frac{d_{i}}{x_{i}}:d_{i}>0\}$. 
\close
\end{example}
\begin{example}
For $\bar{\setK}=\bar{\setK}_{\text{SDP}}$, we see that $x-td\succ 0$ if and only if $\Id\succ t x^{-1/2}dx^{-1/2}$, where $\Id$ is the identity matrix. Hence, if $\lambda_{\max}(x^{-1/2}dx^{-1/2})>0$, then $t<\frac{1}{\lambda_{\max}(x^{-1/2}dx^{-1/2})}$. Thus, $\sigma_{x}(d)=\max\{\lambda_{\max}(x^{-1/2}dx^{-1/2}),0\}$.
\close
\end{example}
\noindent
We will also need the analogous result to Proposition \ref{prop:SCF_upper_bound} for barriers $h\in\scrB_{\nu}(\setK)$:
\begin{proposition}[{Theorem 4.2 \cite{NesTod97}}]
\label{prop:SSB_upper_bound}
Let $h\in\scrB_{\nu}(\setK)$ and $x \in\setK$. Let $d \in \setE$ be such that $\sigma_x(-d)>0$. Then, for all $t \in [0,\frac{1}{\sigma_x(-d)})$, we have:
\begin{equation}
\label{eq:SSB_upper_bound} 
h(x + t d) \leq h(x) + t \inner{\nabla h(x),d} + t^2 \norm{d}_x^2 \omega(t \sigma_x(-d)).
\end{equation}
\end{proposition}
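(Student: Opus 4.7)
My plan is to reduce the bound to a one-dimensional estimate along the line $s \mapsto x + sd$. Introduce the scalar function $\phi(s) \eqdef h(x+sd)$ on $s \in [0, 1/\sigma_{x}(-d))$, so that $\phi(0) = h(x)$, $\phi'(0) = \inner{\nabla h(x), d}$, and $\phi''(s) = \inner{H(x+sd)d, d}$. Writing a second-order Taylor expansion with integral remainder,
\begin{equation*}
\phi(t) = h(x) + t \inner{\nabla h(x), d} + \int_{0}^{t}(t-s)\phi''(s)\, ds,
\end{equation*}
reduces the claim to upper bounding the remainder integral by $t^{2}\norm{d}_{x}^{2}\omega(t\sigma_{x}(-d))$.

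The crux of the argument will be to sharpen the standard self-concordance Hessian estimate from $\phi''(s) \leq \norm{d}_{x}^{2}/(1 - s\norm{d}_{x})^{2}$ (valid only on $[0, 1/\norm{d}_{x})$) to
\begin{equation*}
\phi''(s) \leq \frac{\norm{d}_{x}^{2}}{(1 - s\sigma_{x}(-d))^{2}}, \qquad s \in [0, 1/\sigma_{x}(-d)),
\end{equation*}
which is valid on the larger interval because $\sigma_{x}(-d) \leq \norm{d}_{x}$. This is precisely where the self-scaling hypothesis $h \in \scrB_{\nu}(\setK)$ is essential: by the G\"uler--Tun\c{c}el classification, $\Int(\setK)$ is a symmetric cone and $\setE$ carries a Euclidean Jordan algebra structure in which $d$ admits a spectral decomposition $d = \sum_{i=1}^{r}\lambda_{i}e_{i}$ relative to a Jordan frame associated with $x$. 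Working in Jordan-algebra coordinates that trivialize $H(x)$, the restriction of $h$ to the line $\{x + sd\}$ decouples into a sum of one-dimensional logarithmic barriers, $\phi(s) = h(x) - \sum_{i}\ln(1 + s\lambda_{i})$. Direct differentiation then gives $\phi''(s) = \sum_{i}\lambda_{i}^{2}/(1 + s\lambda_{i})^{2}$, $\norm{d}_{x}^{2} = \sum_{i}\lambda_{i}^{2}$, and $\sigma_{x}(-d) = \max_{i}(-\lambda_{i})_{+}$, so the required bound follows termwise from the elementary inequality $1 + s\lambda_{i} \geq 1 - s\sigma_{x}(-d) > 0$ together with monotonicity of $u \mapsto 1/u^{2}$ on the positive reals.

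Plugging this Hessian bound into the remainder integral and substituting $u = s\sigma_{x}(-d)$ will produce, after a short calculation,
\begin{equation*}
\int_{0}^{t}(t-s)\frac{\norm{d}_{x}^{2}}{(1-s\sigma_{x}(-d))^{2}}\, ds = \norm{d}_{x}^{2}\, t^{2}\, \omega(t\sigma_{x}(-d)),
\end{equation*}
via the identity $\int_{0}^{v}(v-u)/(1-u)^{2}\, du = -v - \ln(1-v) = v^{2}\omega(v)$, which is immediate by splitting the integrand as $(v-1)/(1-u)^{2} + 1/(1-u)$ and integrating. Substituting back into the Taylor expansion yields exactly \eqref{eq:SSB_upper_bound}. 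The main obstacle is squarely the sharpened Hessian estimate: routine self-concordance controls $H(x+sd)$ only in terms of $\norm{d}_{x}$, and stretching the domain of validity out to $[0, 1/\sigma_{x}(-d))$ genuinely uses the symmetric-cone/Jordan-algebra structure of $\setK$ rather than self-concordance alone.
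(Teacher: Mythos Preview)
The paper does not prove this proposition; it is quoted directly as Theorem~4.2 of Nesterov--Todd~\cite{NesTod97}, so there is no in-paper argument to compare against. Relative to the original proof in~\cite{NesTod97}, your argument is correct but takes a different route to the key Hessian estimate $\phi''(s)\leq\norm{d}_x^2/(1-s\sigma_x(-d))^2$. Nesterov and Todd obtain this as an operator inequality $H(y)\preceq(1-\sigma_x(x-y))^{-2}H(x)$ (their Theorem~4.1), proved directly from the self-scaled axioms via the scaling-point construction, with no Jordan-algebra input. You instead invoke the classification of self-scaled cones as symmetric cones and read the bound off termwise from an explicit spectral decomposition; this is legitimate and more transparent once the Jordan formalism is available, but it imports a heavy structural theorem where the original argument needs none. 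Once the Hessian bound is in hand, your integral-remainder reduction and the identity $\int_0^v(v-u)(1-u)^{-2}\,du=-v-\ln(1-v)=v^2\omega(v)$ match the standard derivation exactly.

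One small caveat: the decoupling $\phi(s)=h(x)-\sum_i\ln(1+s\lambda_i)$ presumes $h$ is the canonical log-determinant barrier of the Jordan algebra. The classification tells you $\setK$ is symmetric, but identifying an arbitrary $h\in\scrB_\nu(\setK)$ with the canonical barrier (or a positively weighted sum thereof on the irreducible factors) is itself part of the Hauser--G\"uler result and should be cited explicitly. With weights $c_j>0$ one has $\phi''(s)=\sum_j c_j\lambda_j^2/(1+s\lambda_j)^2$ and $\norm{d}_x^2=\sum_j c_j\lambda_j^2$, while $\sigma_x(-d)=\max_j(-\lambda_j)_+$ is barrier-independent, so the termwise bound and the rest of your computation go through unchanged.
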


\subsection{Unified Notation}
Our algorithms work on any conic domain on which we can efficiently evaluate a $\nu$-LHSCB. We formalize this in the following assumption
\begin{assumption}\label{ass:barrier}
$\bar{\setK}$ is a regular cone admitting an efficient barrier setup $h\in\scrH_{\nu}(\setK)$. By this we mean that at a given query point $x\in\setK$, we can construct an oracle that returns to us information about the values $h(x),\nabla h(x)$ and $H(x)=\nabla^{2}h(x)$, with low computational efforts. 
\end{assumption}
Given the potential advantages when working on symmetric cones, it is useful to develop a unified notation handling both cases at the same time. Note that when $h \in \scrB_{\nu}(\setK)$, we have the flexibility to treat $h$ either as $h \in\scrH_{\nu}(\setK)$ or as $h \in\scrB_{\nu}(\setK)$. To unify the presentation, we define
\begin{equation}\label{eq:zeta}
(\forall (x,d)\in\setX\times \setE):\;\zeta(x,d)=\left\{\begin{array}{ll} 
\norm{d}_{x} & \text{if }h\in\scrH_{\nu}(\setK)\setminus\scrB_{\nu}(\setK),\\
\sigma_{x}(-d) & \text{if }h\in\scrB_{\nu}(\setK).
\end{array}
\right.  
\end{equation}
Note that
\begin{align}
&(\forall (x,d)\in\setX\times \setE):\; \zeta(x,d)\leq\norm{d}_{x}, \label{eq:boundzeta}\\
&(\forall (x,d)\in\setX\times \setE)(\forall t \in [0,\frac{1}{\zeta(x,d)})):\; x + td \in\setK .\label{eq:step_length_zeta}
\end{align}
Finally, for the Bregman divergence $D_{h}(u,x):= h(u)-h(x)-\inner{\nabla h(x),u-x}$ defined for $x,u\in\setK$, Proposition \ref{prop:SCF_upper_bound}, Proposition \ref{prop:SSB_upper_bound} together with eq. \eqref{eq:zeta}, give us the one-and-for-all Bregman bound
\begin{equation}\label{eq:Dbound}
D_{h}(x+t d,x)=h(x+t d)-h(x)-\inner{\nabla h(x),td}\leq  t^2 \norm{d}_{x}^2 \omega(t \zeta(x,d))
\end{equation}
valid for all $(x,d)\in\feas\times \setE$ and $t \in [0,\frac{1}{\zeta(x,d)})$.

\section{Approximate optimality conditions}
\label{sec:Optimality}
The next definition specifies our notion of an approximate first-order KKT point for problem \eqref{eq:Opt}.
\begin{definition}\label{def:eps_KKT}
Given $\eps \geq 0$, we call a triple $(\bar{x},\bar{y},\bar{s})\in\setE\times\R^{m}\times\setE^{\ast}$ an $\eps$-KKT point for problem \eqref{eq:Opt} if
\begin{align}
& \bA\bar{x}=b, \bar{x}\in\setK, \bar{s} \in\setK^{\ast}, \label{eq:eps_optim_equality_cones} \\
&\|\nabla f(\bar{x}) - \bA^{\ast}\bar{y}-\bar{s} \|\leq \eps,  \label{eq:eps_optim_grad} \\
& \inner{\bar{s},\bar{x}} \leq \eps. \label{eq:eps_optim_complem} 
\end{align}
\end{definition}
To justify this definition, let $x^{\ast}$ be a local solution of problem \eqref{eq:Opt}. Then, for $\delta>0$ sufficiently small, the point $x^{\ast}$ is the unique global solution to the perturbed optimization problem with ball restriction $\overline{\ball(x^{\ast};\delta)}\eqdef\{x\in\setE\vert\; \norm{x-x^{\ast}}\leq\delta\}$:
\begin{equation}
\label{eq:limit_optimality_proof_0}
	\min_{x\in\setX\cap\overline{\ball(x^{\ast};\delta)}} f(x) + \frac{1}{4}\norm{x-x^{*}}^4.
\end{equation}
Next, using the barrier $h\in\scrH_{\nu}(\setK)$, we absorb the constraint $x\in\setK$ in the penalty $\mu_k h(x)$, where $\mu_k> 0$, $\mu_{k}\downarrow 0$ is a given sequence. This leads to the barrier formulation
\begin{equation}\label{eq:limit_optimality_proof_1}
\min_{x\in\setX\cap\overline{\ball(x^{\ast};\delta)}}\varphi_{k}(x)\eqdef F_{\mu_{k}}(x)+\frac{1}{4}\norm{x-x^{\ast}}^{4},\quad F_{\mu_{k}}(x)= f(x)+\mu_{k}h(x).
\end{equation}
From the classical theory of interior penalty methods \cite{FiacMcCo68}, it is known that a global solution $x^k$ exists for this problem for all $k$ and that cluster points of $x^k$ are global solutions of \eqref{eq:limit_optimality_proof_0}. Clearly, $x^{k}\in \setX\cap\overline{\ball(x^{\ast},\delta)}$ for all $k$ and $x^{k}\to x^{\ast}$. Setting $s^{k}= -\mu_{k}\nabla h(x^{k})$, which belongs to $\setK^{\ast}$ by eq. \eqref{eq:relations_1}, and exploiting the properties of the barrier function $h\in\scrH_{\nu}(\setK)$, we see that 
$
\inner{s^{k},x^{k}}=-\mu_{k}\inner{\nabla h(x^{k}),x^{k}} \stackrel{\eqref{eq:log_hom_scb_hess_prop}}{=}\mu_{k}\nu.
$
Consequently, $\lim_{k\to\infty}\inner{s^{k},x^{k}}=0$. Since $x^{k}\to x^{\ast}$, the restriction $x^{k}\in\overline{\ball(x^{\ast};\delta)}$ will automatically hold for $k$ sufficiently large. By the full-rank assumption, the first-order optimality conditions of problem \eqref{eq:limit_optimality_proof_1} reads as 
\[
\nabla f(x^{k})-\bA^{\ast}y^{k}-s^{k}-\norm{x^{k}-x^{\ast}}^{2}\cdot(x^{k}-x^{\ast})=0,
\]
for all $k$ large enough. Hence, setting $\delta\leq\eps^{1/3}$, $\mu_k \leq \eps/\nu$, and $\bar{x}=x^{k},\bar{s}=s^{k},\bar{y}=y^{k}$, we obtain a triple satisfying conditions \eqref{eq:eps_optim_equality_cones}-\eqref{eq:eps_optim_complem}.
\\

Assuming twice continuous differentiability of $f$ on $\setX$, our notion of an approximate second-order KKT point for problem \eqref{eq:Opt} is defined as follows.
\begin{definition}\label{def:eps_SOKKT}
Given $\eps_1,\eps_2 \geq 0$, we call a triple $(\bar{x},\bar{y},\bar{s})\in\setE\times\R^{m}\times\setE^{\ast}$ an $(\eps_1,\eps_2)$-2KKT point for problem \eqref{eq:Opt} if
\begin{align}
& \bA\bar{x}=b, \bar{x}\in\setK, \bar{s} \in\setK^{\ast}, \label{eq:eps_SO_optim_equality_cones} \\
&\|\nabla f(\bar{x}) - \bA^{\ast}\bar{y}-\bar{s} \|\leq \eps_1,  \label{eq:eps_SO_optim_grad} \\
&\inner{\bar{s},\bar{x}} \leq \eps_1, \label{eq:eps_SO_optim_complem} \\
& \nabla^2f(\bar{x}) + \sqrt{\eps_2} H(\bar{x}) \succeq 0 \;\; \text{on} \;\; \setL_0.   \label{eq:eps_SO_optim_SO}
\end{align}
\end{definition}
The first three conditions are the same as for the $\eps$-KKT point. 
The last one can be justified as follows. Using the full-rank condition, the second-order optimality condition for problem \eqref{eq:limit_optimality_proof_1} says that $x^{k}$ satisfies 
\[
\inner{(\nabla^{2}f(x^{k})+\mu_{k}H(x^{k}))d,d} \geq -2\inner{x^{k}-x^{\ast},d}^{2} - \norm{x^{k}-x^{\ast}}^2\norm{d}^2\geq -3\delta^{2}\norm{d}_{2}^{2}\qquad\forall d\in\setL_{0}.
\]
Setting $\mu_k \leq \sqrt{\eps_2}$ and $\delta \leq (\eps_2/9)^{1/4}$, we see that $x^{k}$ satisfies  $\inner{(\nabla^{2}f(x^{k})+\sqrt{\eps_2}(H(x^{k})+\bI))d,d}\geq 0,\forall d\in\setL_{0}$, which is clearly implied by \eqref{eq:eps_SO_optim_SO}. 
\begin{remark}\label{rem:soopt}
To compare our second-order condition with the ones previously formulated in the literature, we consider the particular case $\bar{\setK}=\bar{\setK}_{NN}$ as in \cite{HaeLiuYe18,NeiWr20} with the log-barrier setup giving $H(x)=\diag[x_{1}^{-2},\ldots,x_{n}^{-2}]\eqdef\XX^{-2}$. Within this setup, our second-order condition \eqref{eq:eps_SO_optim_SO} becomes, after multiplication by  $[H(x)]^{-1/2}=\XX$ from left and right,
\[
\XX\nabla^{2}f(x)\XX+\sqrt{\eps_{2}}\bI\succeq 0\qquad \text{on the set }\{d\in\setE\vert \bA\XX d=0\}.
\] 
This is equivalent to Proposition 2(c) in \cite{HaeLiuYe18}, modulo our use of $\sqrt{\eps_{2}}$ instead of $\eps$ in \cite{HaeLiuYe18}, as well as equation (1.6d) in \cite{NeiWr20}, modulo our use of $\sqrt{\eps_{2}}$ instead of $\eps_H$ in \cite{NeiWr20}.
%
%
%
\close
\end{remark}
\begin{remark}
If $\bar{\setK}$ is a symmetric cone, the complementarity conditions \eqref{eq:eps_optim_complem} and \eqref{eq:eps_SO_optim_complem} are equivalent to complementarity notions formulated in terms of the multiplication $\circ$ under which $\setK$ becomes an Euclidean Jordan algebra. \citep[][Prop. 2.1]{LouFukMas18} shows that $x\circ y=0$ if and only $\inner{x,y}=0$, where $\inner{\cdot,\cdot}$ is the inner product of the ambient space $\setE$. Moreover, if $\bar{\setK}$ is a primitive symmetric cone, then by \citep[][Prop. III.4.1]{FarKor94}, there exists a constant $a>0$ such that $a\tr(x\circ y)=\inner{x,y}$ for all $x,y\in\setK$. In view of this relation, our complementarity notions could be specialized to the condition $\bar{s}\circ \bar{x}\leq \eps$. Hence, our approximate KKT conditions reduce to the ones reported in \cite{AndFukHaeSanSec21}. In particular, for $\bar{\setK}=\bar{\setK}_{\text{NN}}$ we recover the standard complementary slackness condition $s^{k}_{i}x^{k}_{i}\to 0$ as $k\to\infty$ for all $i$, as in this case the Jordan product $\circ$ gives rise to the Hadamard product. See \cite{Andreani:2019uf} for more details.
\close
\end{remark}

\subsection{On the relation to scaled critical points}
In absence of differentiability at the boundary, a popular formulation of necessary optimality conditions involves the definition of scaled-critical points. Indeed, at a local minimizer $x^{\ast}$, the scaled first-order optimality condition $x_{i}^{\ast}[\nabla f(x^{\ast})]_{i}=0,1\leq i\leq n$ holds, where the product is taken to be $0$ when the derivative does not exist. Based on this characterization, one may call a point $x\in\setK_{\text{NN}}$ with $\abs{x_{i}[\nabla f(x)]_{i}}\leq\eps$ for all $i=1,\ldots,n$ and $\eps$-scaled first-order point. Algorithms designed to produce $\eps$-scaled first-order points, with some small $\eps>0$, have been introduced in \cite{BiaCheYe15} and \cite{BiaChe15}. As reported in \cite{HaeLiuYe18}, there are several problems associated with this weak definition of a critical point. First, when derivatives are available on $\bar{\setK}_{\text{NN}}$, the standard definition of a critical point would entail the inclusion $\inner{\nabla f(x),x'-x}\geq 0$ for all $x'\in \bar{\setK}_{\text{NN}}.$ Hence, $[\nabla f(x)]_{i}=0$ for $x_{i}>0$ and $[\nabla f(x)]_{i}\geq 0$ for $x_{i}=0$. It follows, $\nabla f(x)\in\bar{\setK}_{\text{NN}}$, a condition that is absent in the definition of a scaled critical point. Second, scaled critical points come with no measure of strength, as they holds trivially when $x=0$, regardless of the objective function. Third, there is a general gap between local minimizers and limits of $\eps$-scaled first-order points, when $\eps\to 0^{+}$ (see \cite{HaeLiuYe18}). Similar remarks apply to the scaled second-order condition, considered in \cite{BiaChe15}. Our definition of approximate KKT points overcome these issues. In fact, our definitions of approximate first- and second-order KKT points is continuous in $\eps$, and therefore in the limit our approximate KKT points coincide with the classical first- and second-order KKT conditions for a local minimizer. This is achieved without assuming global differentiability of the objective function or performing an additional smoothing of the problem data as in \cite{Bian:2013vd,BiaChe15}.


\section{A first-order Hessian-Barrier Algorithm}
\label{sec:firstorder}

In this section we introduce a first-order potential reduction method for solving \eqref{eq:Opt} that uses a barrier $h \in\scrH_{\nu}(\setK)$ and potential function \eqref{eq:potential}. 
We assume that we are able to compute an approximate analytic center at low computational cost. Specifically, our algorithm relies on the availability of a $\nu$-analytic center, i.e. a point $x^{0}\in\setX$ such that 
\begin{equation}\label{eq:analytic_center}
h(x)\geq h(x^{0})-\nu\qquad\forall x\in\setX. 
\end{equation}
To obtain such a point $x^{0}$, one can apply interior point methods to the convex programming problem $\min_{x\in \feas}h(x)$. Moreover, since $\nu \geq 1$ we do not need to solve it with high precision, making the application of computationally cheap first-order method, such as \cite{Dvurechensky:2022tu}, an appealing choice for this preprocessing step. 

\subsection{Local properties}
Given $x\in\setX$, define the set of \emph{feasible directions} as $\scrF_{x}=\{v\in\setE\vert x+v\in\feas\}.$ Lemma \ref{lem:Dikin} implies that 
\begin{equation}\label{eq:Dikinv}
\scrT_{x}=\{v\in\setE\vert \bA v=0,\norm{v}_{x}<1\}\subseteq\scrF_{x}.
\end{equation}
Upon defining $d=[H(x)]^{1/2}v$ for $v\in\scrT_{x}$, we obtain a point  $d \in \R^{{\rm dim}(\setE)}$ satisfying $\bA[H(x)]^{-1/2}d=0$ and $\norm{d}=\norm{v}_{x}$. Hence, for $x\in\setK$, we can equivalently characterize the set $\scrT_{x}$ as $\scrT_{x}=\{[H(x)]^{-1/2}d\vert \bA[H(x)]^{-1/2}d=0,\norm{d}<1\}$.\\ 
Our complexity analysis relies on the ability to control the behavior of the objective function along the set of feasible directions and with respect to the local norm. 
\begin{assumption}[Local smoothness]
\label{ass:gradLip}
$f:\setE\to\R\cup\{+\infty\}$ is continuously differentiable on $\feas$ and there exists a constant $M>0$ such that for all $x\in\feas$ and $v\in\scrT_{x}$ we have 
\begin{equation}\label{eq:gradLip}
f(x+v) - f(x) - \inner{\nabla f(x),v} \leq \frac{M}{2}\norm{v}_x^2.
\end{equation}
\end{assumption}
\noindent
\begin{remark}
\label{rem:bounded1}
If the set $\bar{\setX}$ is bounded, we have $\lambda_{\min}(H(x)) \geq \sigma$ for some $\sigma >0$. In this case, assuming $f$ has an $M$-Lipschitz continuous gradient, the classical descent lemma \cite{Nes18} implies Assumption \ref{ass:gradLip}. Indeed,
\[
f(x+v) - f(x) - \inner{\nabla f(x),v} \leq \frac{M}{2}\norm{v}^2 \leq \frac{M}{2\sigma}\norm{v}_x^2.
\]
\close
\end{remark}
\begin{remark}
We emphasize that the local Lipschitz smoothness condition \eqref{eq:gradLip} does not require global differentiability. Consider the composite non-smooth and non-convex model \eqref{eq:composite} on $\bar{\setK}_{\text{NN}}$, with $\varphi(s)=s$ for $s \geq 0$. This means $\sum_{i=1}^{n}\varphi(x_{i}^{p})=\norm{x}_{p}^{p}$ for $p\in(0,1)$ and $x\in\bar{\setK}_{\text{NN}}$. As a concrete example for the smooth part of the problem let us consider the $L_{2}$-loss $\ell(x)=\frac{1}{2}\norm{\bN x-\bp}^{2}$. This gives rise to the $L_{2}-L_{p}$ minimization problem, an important optimization formulation arising in phase retrieval, mathematical statistics, signal processing and image recovery \cite{Fou09, GeJiaYe11,Chen:2014wx,LiLiuYaoYe17}. For $x\in\setK_{\text{NN}}$, set $M=\lambda_{\max}(\bN^{\ast}\bN)$, so that 
\[
\ell(x^{+})\leq \ell(x)+\inner{\nabla\ell(x),x^{+}-x}+\frac{M}{2}\norm{x^{+}-x}^{2},
\]
Since $t\mapsto t^{p}$ is concave for $t>0$ and $p\in(0,1)$, we have 
\[
(x_{i}^{+})^{p}\leq  x^{p}_{i}+px_{i}^{p-1}(x^{+}_{i}-x_{i})\qquad i=1,\ldots,n.
\]
Adding all these inequalities together, we immediately arrive at condition \eqref{eq:gradLip} in terms of the Euclidean norm. Over a bounded feasible set $\bar{\setX}$, Remark \ref{rem:bounded1} makes it clear that this implies Assumption \ref{ass:gradLip}. At the same time, $f$ is not differentiable at zero. \close
\end{remark}
We  emphasize that in Assumption \ref{ass:gradLip} the constant $M$ is in general either unknown or is a very conservative upper bound. Therefore, adaptive techniques should be used to estimate it and are likely to improve the practical performance of the method. 

Considering $x\in\setX,v\in\scrT_{x}$ and combining eq. \eqref{eq:gradLip} with eq. \eqref{eq:Dbound} (with $d=v$ and $t=1< \frac{1}{\norm{v}_x} \stackrel{\eqref{eq:boundzeta}}{\leq} \frac{1}{\zeta(x,v)}$) reveals a suitable quadratic model, to be used in the design of our first-order algorithm.
\begin{lemma}[Quadratic Overestimation]
For all $x\in\setX,v\in\scrT_{x}$ and $L\geq M$, we have 
\begin{equation}\label{eq:descentFOM}
F_{\mu}(x+v)\leq F_{\mu}(x)+\inner{\nabla F_{\mu}(x),v}+\frac{L}{2}\norm{v}^{2}_{x}+\mu\norm{v}^{2}_{x}\omega(\zeta(x,v)).
\end{equation}
\end{lemma}

\subsection{Algorithm description and its complexity}
\label{S:FO_descr}
Let $x \in \feas$ be given. Our first-order method employs a quadratic model $ Q^{(1)}_{\mu}(x,v)$ to compute a search direction $v_{\mu}(x)$, given by 
\begin{equation}\label{eq:search}
v_{\mu}(x) \eqdef \argmin_{v\in\setE:\bA v=0} \left\{  Q^{(1)}_{\mu}(x,v) \eqdef F_{\mu}(x) + \inner{\nabla F_{\mu}(x),v}+\frac{1}{2}\norm{v}_{x}^{2} \right\}.
\end{equation}
For the above problem, we have the following system of optimality conditions involving the dual variable $y_{\mu}(x)\in\R^{m}$:
\begin{align}
\nabla F_{\mu}(x) + H(x)v_{\mu}(x) - \bA^{\ast} y_{\mu}(x) &= 0, \label{eq:finder_1} \\
\bA  v_{\mu}(x) &=0. \label{eq:finder_2}
\end{align}
Since $H(x)\succ 0$ for $x\in\feas$, any standard solution method \citep{NocWri00} can be applied for the above linear system.
Moreover, this system can be solved explicitly.
Indeed, since $H(x)\succ 0$ for $x\in\feas$, and $\bA$ has full column rank, the linear operator $\bA[H(x)]^{-1}\bA^{\ast}$ is invertible. Hence, $v_{\mu}(x)$  is given explicitly as
\begin{equation*}
v_{\mu}(x)= - ([H(x)]^{-1}\bA^{\ast}(\bA[H(x)]^{-1}\bA^{\ast})^{-1}\bA[H(x)]^{-1} - [H(x)]^{-1} ) \nabla F_{\mu}(x) \eqdef-\bS_{x}\nabla F_{\mu}(x).
\end{equation*}
To give some intuition behind this expression, observe that we can give an alternative representation of $\bS_{x}$ as $\bS_{x}v = [H(x)]^{-1/2}\Pi_{x}[H(x)]^{-1/2}v$, where
\[
\Pi_{x}v\eqdef v-[H(x)]^{-1/2}\bA^{\ast}(\bA[H(x)]^{-1}\bA^{\ast})^{-1}\bA[H(x)]^{-1/2}v.
\]
This shows that $\bS_{x}$ is just the $\norm{\cdot}_{x}$-orthogonal projection operator onto $\ker(\bA[H(x)]^{-1/2})$. Hence, we can always find a scalar $t>0$ such that $t v_{\mu}(x)\in\setL_{0}$ and $\norm{t v_{\mu}(x)}_{x}<1$. Any such scalar will be a suitable candidate for a step size. To determine an acceptable step-size, consider a point $x \in\setX$, the  search direction $v_{\mu}(x)$ gives rise to a family of parameterized arcs $x^{+}(t)\eqdef x+tv_{\mu}(x)$, where $t\geq 0$. Our aim is to choose this step-size to ensure feasibility of the iterates and decrease of the potential. By \eqref{eq:step_length_zeta} and \eqref{eq:finder_2}, we know that $x^{+}(t)\in\feas$ for all $t\in I_{x,\mu} \eqdef [0,\frac{1}{\zeta(x,v_{\mu}(x))})$. Multiplying \eqref{eq:finder_1} by $v_{\mu}(x)$ and using \eqref{eq:finder_2}, we obtain 
$\inner{\nabla F_{\mu}(x),v_{\mu}(x)}=-\norm{v_{\mu}(x)}_{x}^{2}$. Choosing $t \in  I_{x,\mu}$, we bound
\[
t^{2}\norm{v_{\mu}(x)}_{x}^{2}\omega(t\zeta(x,v_{\mu}(x))) \stackrel{\eqref{eq:omega_upper_bound}}{\leq}  \frac{t^{2}\norm{v_{\mu}(x)}_{x}^{2}}{2(1-t\zeta(x,v_{\mu}(x)))}. 
\]
Therefore, if $t\zeta(x,v_{\mu}(x))\leq 1/2$, we readily see from \eqref{eq:descentFOM} that
\begin{align}
F_{\mu}(x^{+}(t))-F_{\mu}(x)&\leq -t\norm{v_{\mu}(x)}_{x}^{2}+\frac{t^{2}M}{2}\norm{v_{\mu}(x)}_{x}^{2}+\mu t^{2}\norm{v_{\mu}(x)}_{x}^{2} \nonumber\\
&= -t \norm{v_{\mu}(x)}_{x}^{2}\left(1-\frac{M+2\mu}{2}t\right) \eqdef -\eta_{x}(t).\label{eq:success}
\end{align}
The function $t \mapsto \eta_{x}(t)$ is strictly concave with the unique maximum at $ \frac{1}{M+2\mu}$, and two real roots at $t\in\left\{0,\frac{2}{M+2\mu}\right\}$. 
Thus, maximizing the per-iteration decrease $\eta_{x}(t)$  under the restriction $0\leq t\leq\frac{1}{2\zeta(x,v_{\mu}(x))}$, we choose the step-size
\begin{equation*}
\ct_{\mu,M}(x)\eqdef \min \left\{\frac{1}{M+2\mu},\frac{1}{2\zeta(x,v_{\mu}(x))}\right\}.
\end{equation*}
This step-size rule, however, requires knowledge of the parameter $M$. To boost numerical performance, we employ a backtracking scheme in the spirit of \cite{NesPol06} to estimate the constant $M$ at each iteration. This procedure generates a sequence of positive numbers $(L_{k})_{k\geq 0}$ for which the local Lipschitz smoothness condition \eqref{eq:gradLip} holds. More specifically, suppose that $x^{k}$ is the current position of the algorithm with the corresponding initial local Lipschitz estimate $L_{k}$ and $v^{k}=v_{\mu}(x^{k})$ is the corresponding search direction. To determine the next iterate $x^{k+1}$, we iteratively try step-sizes $\alpha_k$ of the form $\ct_{\mu,2^{i_k}L_k}(x^{k})$ for $i_k\geq 0$ until the local smoothness condition \eqref{eq:gradLip} holds with $x=x^{k}$, $v= \alpha_k v^{k}$ and local Lipschitz estimate $M=2^{i_k}L_k$, see \eqref{eq:LS}. This process must terminate in finitely many steps, since when $2^{i_k}L_k \geq M$, inequality \eqref{eq:gradLip} with $M$ changed to $2^{i_k}L_k$, i.e., \eqref{eq:LS}, follows from Assumption \ref{ass:gradLip}. Combining the search direction finding problem \eqref{eq:search} with the just outlined backtracking strategy, yields an \underline{A}daptive first-order \underline{H}essian-\underline{B}arrier \underline{A}lgorithm ($\AHBA$, Algorithm \ref{alg:AHBA}).
\begin{algorithm}[t]
\caption{ \underline{A}daptive first-order \underline{H}essian-\underline{B}arrier \underline{A}lgorithm  - $\AHBA(\mu,\eps,L_{0},x^{0})$}
\label{alg:AHBA}
\SetAlgoLined
\KwData{ $h \in\scrH_{\nu}(\setK)$, $\mu>0,\eps>0,L_0>0,x^{0}\in\setX$.
}
\KwResult{$(x^{k},y^{k},s^{k},L_{k})\in\setX\times\R^{m}\times\setK^{\ast}\times\R_{+}$, where $s^{k}=\nabla f(x^{k}) -\bA^{\ast}y^{k}$, and $L_{k}$ is the last estimate of the Lipschitz constant.}
Set $k=0$\;
\Repeat{
		$\norm{v^k}_{x^k} < \tfrac{\eps}{\nu}$ 
	}{	
		Set $i_k=0$. Find $v^k\eqdef v_{\mu}(x^k)$ and  the corresponding dual variable $y^k\eqdef y_{\mu}(x^k)$ as the solution to
		\begin{equation}\label{eq:finder}
		\min_{v\in\setE:\bA v=0}\{F_{\mu}(x^k) + \inner{\nabla F_{\mu}(x^k),v}+\frac{1}{2}\norm{v}_{x^{k}}^{2}\}. 
		\end{equation}
		\Repeat{
			\begin{equation}				
				f(z^{k}) \leq f(x^{k}) + \inner{\nabla f(x^{k}),z^{k}-x^{k}}+2^{i_{k}-1}L_{k}\norm{z^{k}-x^{k}}^{2}_{x^{k}}.
				\label{eq:LS}
			\end{equation}
		}
		{
			\begin{equation}\label{eq:alpha_k}
				\alpha_k \eqdef  \min \left\{\frac{1}{2^{i_k}L_{k} + 2 \mu},\frac{1}{2\zeta(x^k,v^k)} \right\},  	\text{where $\zeta(\cdot,\cdot)$ as in \eqref{eq:zeta}}	
			\end{equation}
			Set $z^{k}=x^{k} + \alpha_k v^k$, $i_k=i_k+1$\;
		}
		Set $L_{k+1} = 2^{i_k-1}L_{k}$, $x^{k+1}=z^{k}$, $k=k+1$\;
	}
\end{algorithm}
Our main result on the iteration complexity of Algorithm \ref{alg:AHBA} is the following Theorem, whose proof is given in Section \ref{sec:ProofFOM}. 
\begin{theorem}
\label{Th:AHBA_conv}
Let Assumptions \ref{ass:1}-\ref{ass:gradLip} hold. Fix the error tolerance $\eps>0$, the regularization parameter $\mu=\frac{\eps}{\nu}$, and some initial guess $L_0>0$ for the Lipschitz constant. Let $(x^{k})_{k\geq 0}$ be the trajectory generated by $\AHBA(\mu,\eps,L_{0},x^{0})$, where $x^{0}$ is a $\nu$-analytic center satisfying \eqref{eq:analytic_center}. Then the algorithm stops in no more than 
\begin{equation}
\label{eq:FO_main_Th_compl}
\K_{I}(\eps,x^{0})= \ceil[\bigg]{4(f(x^{0}) - f_{\min}(\setX)+ \eps) \frac{\nu^{2}(\max\{M,L_0\}+\eps/\nu)}{\eps^{2}}}
\end{equation}
outer iterations, and the number of inner iterations is no more than $2(\K_{I}(\eps,x^{0})+1)+\max\{\log_{2}(M/L_{0}),0\}$. Moreover, the last iterate obtained from $\AHBA(\mu,\eps,L_{0},x^{0})$ constitute a $2\eps$-KKT point for problem \eqref{eq:Opt} in the sense of Definition \ref{def:eps_KKT}.
\end{theorem}
\begin{remark}
The line-search process of finding the appropriate $i_k$ is simple since only recalculating $z^k$ is needed, and repeatedly solving problem \eqref{eq:finder} is not required. Furthermore, the sequence of constants $L_k$ is allowed to decrease along subsequent iterations, which is achieved by the division by the constant factor 2 in the final updating step of each iteration. This potentially leads to longer steps and faster decrease of the potential.
\close
\end{remark}
\begin{remark}
\label{rem:FO_complexity_simplified}
Since $\nu \geq 1$, $f(x^{0}) - f_{\min}(\setX)$ is expected to be larger than $\eps$, and the constant $M$ is potentially large, we see that the main term in the complexity bound \eqref{eq:FO_main_Th_compl} is $O\left(\frac{M\nu^2(f(x^{0}) - f_{\min}(\setX))}{\eps^2}\right)=O(\frac{\nu^{2}}{\eps^{2}})$, i.e. has the same dependence on $\eps$ as the standard complexity bounds  \cite{CarDucHinSid19b,CarDucHinSid19,lan2020first} of first-order methods for non-convex problems under the standard Lipschitz-gradient assumption, which on bounded sets is subsumed by our Assumption \ref{ass:gradLip}. Further, if the function $f$ is quadratic, Assumption \ref{ass:gradLip} holds with $M=0$ and we can take $L_0=0$. In this case, the complexity bound \eqref{eq:FO_main_Th_compl} improves to $O\left(\frac{\nu(f(x^{0}) - f_{\min}(\setX))}{\eps}\right)$. 

Just like classical interior-point methods, the iteration complexity of $\AHBA$ depends on the barrier parameter $\nu\geq 1$. For conic domains, the characterization of this barrier parameter has thus been an active research line. \cite{GulTun98} demonstrated that for symmetric cones, the barrier parameter is equivalent to algebraic properties of the cone and identified it with the rank of the cone (see \cite{FarKor94} for a definition of the rank of a symmetric cone). This deep analysis gives an exact characterization of the optimal barrier parameter for the most important conic domains in optimization. For $\setK_{\text{NN}}$ and $\setK_{\text{SDP}}$, it is known that $\nu=n$ is optimal, whereas for $\setK_{\text{SOC}}$ the optimal barrier parameter is $\nu=2$ (and therefore independent of the ambient dimension $n$). 
\close
\end{remark}

\paragraph{Connection with interior point flows on polytopes.}
Consider $\bar{\setK}=\bar{\setK}_{\text{NN}}$, and $\setX=\setK_{\text{NN}}\cap\setL$. We are given a function $f:\bar{\setX}\to\R$ which is the restriction of a smooth function $f:\Rn\to\R$. 
The canonical barrier for this setting is $h(x)=-\sum_{i=1}^{n}\ln(x_{i})$, so that $H(x)=\diag[x_{1}^{-2},\ldots,x_{n}^{-2}]=\XX^{-2}$ for $x\in\setX$. Applying our first-order method on this domain gives the search direction 
$v_{\mu}(x)=-\bS_{x}\nabla F_{\mu}(x)=-\XX(\bI-\XX\bA^{\top}(\bA\XX^{2}\bA^{\top})^{-1}\bA\XX)\XX\nabla F_{\mu}(x)$. This explicit formula yields various interesting connections between our approach and classical methods. For $\bA=\1_{n}^{\top}$, the feasible set $\setX$ reduces to the relative interior of the $(n-1)$-dimensional unit simplex. In this case, the vector field $v_{\mu}(\cdot)$ simplifies further to 
\[
[v_{\mu}(x)]_{i}=[\XX^{2}\nabla F_{\mu}(x)]_{i}-\frac{x_{i}^{2}}{\sum_{j=1}^{n}x_{j}^{2}}\sum_{j}[\XX^{2}\nabla F_{\mu}(x)]_{j} \quad 1\leq i\leq n,
\]
Observe that $v_{\mu}(x)\in (\1_{n})^{\bot}=\ker(\1_{n}^{\top})$. For $f(x)=c^{\top}x$ and $\mu=0$, we further obtain from this formula the search direction employed in \emph{affine scaling} methods for linear programming \cite{BayLag89,BayLag89II,AdlMont91,TseBomSch11}. 
%
\cite{HBA-linear} partly motivated their algorithm as a discretization of the Hessian-Riemannian gradient flows introduced in \cite{ABB04} and \cite{BolTeb03}. Heuristically, we can therefore interpret $\AHBA$ as an Euler discretization (with non-monotone adaptive step-size policies) of the gradient-like flow $\dot{x}(t)=-\bS_{x(t)}\nabla F_{\mu}(x(t))$, which resembles very much the class of dynamical systems introduced in \cite{BolTeb03}. This gives an immediate connection to a large class of interior point flows on polytopes, heavily studied in control theory \cite{HelMoo96}.
\subsection{Proof of Theorem \ref{Th:AHBA_conv}}
\label{sec:ProofFOM}
Our proof proceeds in several steps. First, we show that procedure $\AHBA(\mu,\eps,L_{0},x^{0})$ produces points in $\setX$, and, thus, is indeed an interior-point method. Next, we show that the line-search process of finding appropriate $L_k$ in each iteration is finite, and estimate the total number of trials in this process. Then we enter the core of our analysis where we prove that if the stopping criterion does not hold at iteration $k$, i.e. $\norm{v^{k}}_{x^{k}} \geq \tfrac{\eps}{\nu}$, then the objective $f$ is decreased by a quantity $O(\eps^{2})$, and, since the objective is globally lower bounded, we conclude that the method stops in at most $O(\eps^{-2})$ iterations. Finally, we show that when the stopping criterion holds, the method has generated an $\eps$-KKT point. 

\subsubsection{Interior-point property of the iterates}
\label{S:FO_correct}
By construction $x^{0}\in\setX$. Proceeding inductively, let $x^{k}\in\setX$ be the $k$-th iterate of the algorithm, delivering the search direction $v^{k}=v_{\mu}(x^{k})$. 
By eq. \eqref{eq:alpha_k}, the step-size $\alpha_k$ satisfies $\alpha_{k}\leq \frac{1}{2\zeta(x^{k},v^{k})}$, and, hence, $\alpha_{k}\zeta(x^{k},v^{k})\leq 1/2$ for all $k\geq 0$. 
Thus, by \eqref{eq:step_length_zeta} $x^{k+1}=x^{k}+\alpha_{k}v^{k} \in\setK$. Since, by \eqref{eq:finder}, $\bA v^{k} =0$, we have that $x^{k+1} \in \setL$. Thus, $x^{k+1}\in\setK \cap \setL=\setX$. By induction, we conclude that $(x^{k})_{k\geq 0}\subset\setX$. 

\subsubsection{Bounding the number of backtracking steps}
\label{sec:backtrack1}
Let us fix iteration $k$. Since the sequence $2^{i_k} L_k $ is increasing as $i_k$ is increasing, and Assumption \ref{ass:gradLip} holds, we know that when $2^{i_k} L_k \geq \max\{M,L_k\}$, the line-search process for sure stops since inequality \eqref{eq:LS} holds. 
Hence, $2^{i_k} L_k \leq 2\max\{M,L_k\}$ must be the case, and, consequently, $L_{k+1} = 2^{i_k-1} L_k \leq \max\{M,L_k\}$, which, by induction, gives $L_{k+1} \leq \bar{M}\eqdef\max\{M,L_0\}$. 
At the same time, $\log_{2}\left(\frac{L_{k+1}}{L_{k}}\right)= i_{k}-1$, $\forall k\geq 0$. Let $N(k)$ denote the number of inner line-search iterations up to the $k-$th iteration of $\AHBA(\mu,\eps,L_{0},x^{0})$. Then, using that $L_{k+1} \leq \bar{M}=\max\{M,L_0\}$, 
\begin{align*}
N(k)&=\sum_{j=0}^{k}(i_{j}+1)=\sum_{j=0}^{k} (\log_{2}(L_{j+1}/L_{j})+2 ) \leq 2(k+1)+\max\{\log_{2}(M/L_{0}),0\}.
\end{align*}
This shows that on average the inner loop ends after two trials. 

\subsubsection{Per-iteration analysis and a bound for the number of iterations}
Let us fix iteration counter $k$. Since $L_{k+1} = 2^{i_k-1}L_k$, the step-size \eqref{eq:alpha_k} reads as $\alpha_{k}=\min \left\{\frac{1}{2L_{k+1} + 2 \mu},\frac{1}{2\zeta(x^k,v^k)} \right\}$. Hence, $\alpha_{k}\zeta(x^{k},v^{k})\leq 1/2$, and \eqref{eq:success} with the identification $t=\alpha_k = \ct_{\mu,2L_{k+1}}(x^{k})$, $M=2L_{k+1}$, $x=x^{k}$, $v_{\mu}(x^{k}) \eqdef v^k$ gives:  
\begin{equation}
\label{eq:FO_per_iter_proof_2}
F_{\mu}(x^{k+1})-F_{\mu}(x^{k})\leq -\alpha_k \norm{v^k}_{x^k}^{2}\left(1-(L_{k+1}+\mu)\alpha_k\right) \leq -\frac{\alpha_k \norm{v^k}_{x^k}^{2}}{2},
\end{equation}
where we used that $\alpha_k \leq \frac{1}{2(L_{k+1}+\mu)}$ in the last inequality. 
Substituting into \eqref{eq:FO_per_iter_proof_2} the two possible values of the step-size $\alpha_k$ in \eqref{eq:alpha_k} gives
\begin{equation}
\label{eq:per_iter_decr_0}
F_{\mu}(x^{k+1})-F_{\mu}(x^{k})\leq 
\left\{
\begin{array}{ll}
- \frac{\norm{v^{k}}_{x^{k}}^{2} }{4(L_{k+1}+\mu)} & \text{if }  \alpha_k=\frac{1}{2(L_{k+1}+\mu)}\\

- \frac{\norm{v^{k}}_{x^{k}}^{2} }{4\zeta(x^k,v^k)}  \stackrel{\eqref{eq:boundzeta}}{\leq} - \frac{\norm{v^{k}}_{x^{k}}}{4} & \text{if }  \alpha_k=\frac{1}{2\zeta(x^k,v^k)}.
\end{array}\right.
\end{equation}
Recalling $L_{k+1} \leq \bar{M}$ (see section \ref{sec:backtrack1}), we obtain that 
\begin{equation}
\label{eq:per_iter_decr}
F_{\mu}(x^{k+1}) - F_{\mu}(x^{k}) \leq  -\frac{\norm{v^{k}}_{x^{k}}}{4} \min\left\{1,  \frac{\norm{v^{k}}_{x^{k}}}{\bar{M}+\mu}\right\}=-\delta_{k}.
\end{equation}
Rearranging and summing these inequalities for $k$ from $0$ to $K-1$ gives
\begin{align}
&K\min_{k=0\ldots,K-1} \delta_{k} \leq  \sum_{k=0}^{K-1}\delta_{k}\leq F_{\mu}(x^{0})-F_{\mu}(x^{K}) \notag \\
& \quad\stackrel{\eqref{eq:potential}}{=} f(x^{0}) - f(x^{K}) + \mu (h(x^{0}) - h(x^{K})) \leq f(x^{0}) - f_{\min}(\setX) + \eps, \label{eq:FO_per_iter_proof_6} 
\end{align}
where we used that, by the assumptions of Theorem \ref{Th:AHBA_conv}, $x^{0}$ is a $\nu$-analytic center defined in \eqref{eq:analytic_center} and $\mu = \eps/\nu$, implying that $h(x^{0}) - h(x^{K}) \leq \nu = \eps/\mu$.
Thus, up to passing to a subsequence, $\delta_{k}\to 0$, and consequently $\norm{v^{k}}_{x^{k}} \to 0$ as $k \to \infty$. This shows that the stopping criterion in Algorithm \ref{alg:AHBA} is achievable.

Assume now that the stopping criterion $\norm{v^k}_{x^k} < \frac{\eps}{ \nu}$ does not hold for $K$ iterations of $\AHBA$. Then, for all $k=0,\ldots,K-1,$ it holds that 
$\delta_{k}\geq \min\left\{\frac{\eps}{4 \nu},\frac{\eps^{2}}{4 \nu^{2}(\bar{M}+\mu)}\right\}$. 
Together with the parameter coupling $\mu=\frac{\eps}{\nu}$, it follows from \eqref{eq:FO_per_iter_proof_6} that
\[
K\frac{\eps^{2}}{4 \nu^{2}(\bar{M}+\eps/\nu)} = K \min\left\{\frac{\eps}{4 \nu},\frac{\eps^{2}}{4 \nu^{2}(\bar{M}+\eps/\nu)}\right\}\leq f(x^{0})-f_{\min}(\setX)+\eps.
\]
Hence, recalling that $\bar{M}=\max\{M,L_0\}$,
\[
K \leq 4(f(x^{0}) - f_{\min}(\setX)+ \eps) \cdot \frac{\nu^{2}(\max\{M,L_0\}+\eps/\nu)}{\eps^{2}},
\] 
i.e., the algorithm stops for sure after no more than this number of iterations. This, combined with the bound for the number of inner steps in Section \ref{sec:backtrack1}, proves the first statement of Theorem \ref{Th:AHBA_conv}.

\subsubsection{Generating $\eps$-KKT point}
To finish the proof of Theorem \ref{Th:AHBA_conv}, we now show that when Algorithm \ref{alg:AHBA} stops for the first time, it returns a $2\eps$-KKT point of \eqref{eq:Opt} according to Definition \ref{def:eps_KKT}.

Let the stopping criterion hold at iteration $k$. By the optimality condition \eqref{eq:finder_1} and the definition of the potential \eqref{eq:potential}, we have
\begin{equation}
\label{eq:FO_KKT_proof_0}
\nabla f(x^{k})-\bA^{\ast}y^{k}+\mu \nabla h(x^{k}) =-H(x^{k})v^{k}\iff [H(x^{k})]^{-1}\left(\nabla f(x^{k})-\bA^{\ast}y^{k}+\mu \nabla h(x^{k}) \right)=-v^{k}.
\end{equation}
Denoting $g^{k}\eqdef-\mu\nabla h(x^{k})$, multiplying both equations, and using the stopping criterion $\norm{v^{k}}_{x^{k}} < \frac{\eps}{\nu}$, we conclude 
\begin{equation}
\label{eq:FO_KKT_proof_00}
\norm{\nabla f(x^{k})-\bA^{\ast}y^{k}-g^{k}}^{\ast}_{x^{k}}=\norm{v^{k}}_{x^{k}}<\frac{\eps}{\nu}.
\end{equation}
Whence, setting $s^{k}\eqdef\nabla f(x^{k})-\bA^{\ast}y^{k}\in\setE^{\ast}$, we get, by the definition of the dual norm, 
\begin{align}
\frac{\eps}{\nu} > \norm{v^{k}}_{x^{k}}&=\norm{s^{k}-g^{k}}^{\ast}_{x^{k}} \label{eq:FO_KKT_proof_1}\\
&=\norm{s^{k}-g^{k}}_{[H(x^{k})]^{-1}}\stackrel{\eqref{eq:relations}}{=}\norm{s^{k}-g^{k}}_{\nabla^{2}h_{\ast}(-\nabla h(x^{k}))} \\
&=\norm{s^{k}-g^{k}}_{\nabla^{2}h_{\ast}(\frac{1}{\mu}g^{k})} = \mu\norm{s^{k}-g^{k}}_{\nabla^{2}h_{\ast}(g^{k})}, \notag
\end{align}
where in the last equality we used that since $h_{\ast}\in\scrH_{\nu}(\setK^{\ast})$, by \eqref{eq:log_hom_scb_hess_homog_prop}, 
$\nabla^{2}h_{\ast}(\frac{1}{\mu}g^{k})=\mu^{2}\nabla^{2}h_{\ast}(g^{k})$.
Thus, we arrive at
\begin{align}
\norm{s^{k}-g^{k}}_{\nabla^{2}h_{\ast}(g^{k})} = \frac{\norm{v^{k}}_{x^{k}}}{\mu} < \frac{\eps}{\mu \nu} = 1, \label{eq:FO_KKT_proof_2}
\end{align}
where in the last equality we used that, by the assumptions of Theorem \ref{Th:AHBA_conv}, $\mu=\frac{\eps}{\nu}$.
Thus, since, by \eqref{eq:relations_1}, $g^{k}=-\mu\nabla h(x^{k})\in \setK^{\ast}$, we get that $s^{k}\in\setK^{\ast}$. By construction, $x^{k}\in \setK$ and $\bA x^{k} = b$. Thus, \eqref{eq:eps_optim_equality_cones} holds.
Furthermore, $\|\nabla f(x^{k})-\bA^{\ast}y^{k} - s^{k}\|=0\leq 2 \eps$, meaning that \eqref{eq:eps_optim_grad} holds. 
Finally, since $(x^{k},s^{k})\in\setK\times\setK^{\ast}$, we see 
\begin{align}
0 \leq \inner{s^{k},x^{k}}&=\inner{s^{k}-g^{k},x^{k}}+\inner{g^{k},x^{k}} \notag\\
&\leq \norm{s^{k}-g^{k}}_{x^{k}}^{\ast}\cdot\norm{x^{k}}_{x^{k}}-\mu\inner{\nabla h(x^{k}),x^{k}} \notag \\
&\stackrel{\eqref{eq:FO_KKT_proof_1},\eqref{eq:log_hom_scb_norm_prop},\eqref{eq:log_hom_scb_hess_prop}}{=}\norm{v^{k}}_{x^{k}}\sqrt{\nu}+\mu\nu \notag \\
&<\sqrt{\nu}\frac{\eps}{\nu}+\eps\leq 2\eps, \label{eq:FO_KKT_proof_3}
\end{align}
where the last inequality uses $\nu\geq 1$. Hence, the complementarity condition \eqref{eq:eps_optim_complem} holds as well. This finishes the proof of Theorem \ref{Th:AHBA_conv}.

\subsection{Discussion}
\label{sec:FO_discussion}
\paragraph{Strengthened KKT condition.}
For $\bar{\setK}=\bar{\setK}_{\text{NN}}$, \cite{HaeLiuYe18} consider a first-order potential reduction method employing the standard log-barrier $h(x)=-\sum_{i=1}^n \ln(x_i)$ using a trust-region subproblem for obtaining the search direction. For $x\in\setK_{\text{NN}}$, we have $\nabla h(x)=[-x_{1}^{-1},\ldots,-x_{n}^{-1}]^{\top}$, $H(x)=\diag[x_{1}^{-2},\ldots,x_{n}^{-2}]=\XX^{-2}$. Combining \eqref{eq:FO_KKT_proof_0}, the information $[H(x^{k})]^{-1/2}\nabla h(x^{k})= - \1_{n},\;\nu=n$, and the stopping criterion of Algorithm \ref{alg:AHBA} at iteration $k$, saying that $\norm{v^{k}}_{x^{k}}<\frac{\eps}{\nu}$, we see 
\begin{align*}
\norm{H(x^{k})^{-\frac{1}{2}} (\nabla f(x^{k})-\bA^{\ast}y^{k}) - \mu \1_{n}}_{\infty}& \leq\norm{H(x^{k})^{-\frac{1}{2}} (\nabla f(x^{k})-\bA^{\ast}y^{k}) - \mu \1_{n}  }\\
& = \norm{-H(x^{k})^{\frac{1}{2}} v^{k}} < \frac{\eps}{n}.
\end{align*}
Therefore, since $\mu=\eps/n$ and $s^{k}=\nabla f(x^{k})-\bA^{\ast}y^{k}\in\setK^{\ast}_{\text{NN}}=\Rn_{++}$, we obtain from the triangle inequality
\[
0<\norm{\XX^{k}s^{k}}_{\infty}\leq \norm{H(x^{k})^{-1/2} s^{k}-\mu\1_{n}}_{\infty}+\mu\leq\frac{2\eps}{n}.
\]
By Remark \ref{rem:FO_complexity_simplified}, these inequalities are achieved after $O\left(\frac{M n^2(f(x^{0}) - f_{\min}(\setX))}{\eps^2}\right)$ iterations of $\AHBA$, and they are seen to be by the factor $\frac{1}{n}$ sharper than the complementarity measure employed in \cite{HaeLiuYe18}. Conversely, in order to attain an approximate KKT point with the same strength as in \cite{HaeLiuYe18}, the above calculations suggest that we can weaken our tolerance from $\eps$ to $\eps\cdot n$, which results in an overall iteration complexity of   
$O\left(\frac{M  (f(x^{0}) - f_{\min}(\setX))}{\eps^2}\right)$, and a complementarity measure $\norm{\XX^{k}s^{k}}_{\infty}\leq 2\eps$. %
Thus, in the particular case of non-negativity constraints our general algorithm is able to obtain results similar to \cite{HaeLiuYe18}, but under weaker assumptions. At the same time, our algorithm ensures a stronger measure of complementarity. Indeed, our algorithm guarantees that $x^{k}\in\setK_{\text{NN}},s^{k}=\nabla f(x^k) -\bA^{\ast}y^{k}\in\setK_{\text{NN}}$, i.e., $x^{k},s^{k}\geq 0$, and approximate complementary $0\leq \sum_{i=1}^{n}\abs{x_{i}^{k}s_{i}^{k}}=\sum_{i=1}^{n}x_{i}^{k}s_{i}^{k}\leq 2\eps$ after $O\left(\frac{M n^2(f(x^{0}) - f_{\min}(\setX))}{\eps^2}\right)$ iterations, which is stronger than $\max_{1\leq i\leq n}\abs{x_{i}^{k}s_{i}^{k}} \leq 2\eps$ guaranteed by \cite{HaeLiuYe18}. Indeed, $\max_{1\leq i\leq n}\abs{x_{i}^{k}s_{i}^{k}} \leq \sum_{i=1}^{n}\abs{x_{i}^{k}s_{i}^{k}} \leq n \max_{1\leq i\leq n}\abs{x_{i}^{k}s_{i}^{k}}$, and both equalities are achievable. Moreover,  to match our stronger guarantee, one has to change $\eps \to\eps/n$ in the complexity bound of \cite{HaeLiuYe18}, which leads to the same $O\left(\frac{M n^2(f(x^{0}) - f_{\min}(\setX))}{\eps^2}\right)$ complexity bound. Besides this important insights, our algorithm is designed for general cones, rather than only for $\bar{\setK}_{\text{NN}}$. Therefore, we provide a unified approach for essentially all conic domains of relevance in optimization. Finally, our method does not rely on the trust-region techniques as in \cite{HaeLiuYe18} that may slow down the convergence in practice since the radius of the trust region is no grater than $O(\eps)$ leading to short steps.

\paragraph{Exploiting problem structure.}
In \eqref{eq:per_iter_decr_0} we can clearly observe the benefit of the use of $\nu$-SSB in our algorithm, whenever $\setK$ is a symmetric cone. Indeed, when $\alpha_k=\frac{1}{2\zeta(x^k,v^k)}$, the per-iteration decrease of the potential is $\frac{\norm{v^{k}}_{x^{k}}^{2} }{4\zeta(x^k,v^k)} \geq \frac{ \eps \norm{v^{k}}_{x^{k}}}{4\nu\zeta(x^k,v^k)} $ which may be large if $\zeta(x^k,v^k)=\sigma_{x^k}(-v^k) \ll \norm{v^{k}}_{x^{k}}$.

\paragraph{The role of the potential function.}
Next, we discuss more explicitly, how the algorithm and complexity bounds depend on the parameter $\mu$. The first observation is that from \eqref{eq:FO_KKT_proof_2}, to guarantee that $s^{k} \in \setK^{\ast}$, we need the stopping criterion to be $\norm{v^{k}}_{x^k} < \mu$, which by \eqref{eq:FO_KKT_proof_3} leads to the error $2 \mu \nu$ in the complementarity conditions. From the analysis following equation \eqref{eq:FO_per_iter_proof_6}, we have that  
\[
K\frac{\mu^{2}}{4  (\bar{M}+\mu)} = K \min\left\{\frac{\mu}{4},\frac{\mu^{2}}{4 \ (\bar{M}+\mu)}\right\}\leq f(x^{0})-f_{\min}(\setX)+\mu \nu.
\]
Whence, recalling that $\bar{M}=\max\{M,L_0\}$,
\[
K \leq 4(f(x^{0}) - f_{\min}(\setX)+ \mu \nu) \cdot \frac{\max\{M,L_0\}+\mu}{\mu^{2}}.
\]
Thus, we see that after $O(\mu^{-2})$ iterations the algorithm finds a $(2 \mu \nu)$-KKT point, and  if $\mu \to 0$, we have convergence to a KKT point, but the complexity bound tends to infinity and becomes non-informative. At the same time, as it is seen from \eqref{eq:finder}, when $\mu \to 0$, the algorithm itself converges to a preconditioned gradient method since $F_{\mu}(x)=f(x) + \mu h(x) \to  f(x)$. 
We also see from the above explicit expressions in terms of $\mu$ that the design of the algorithm requires careful balance between the desired accuracy of the approximate KKT point expressed mainly by the complementarity condition, stopping criterion, and complexity. Moreover, the step-size should be also taken carefully to ensure the feasibility of the iterates, and the standard for first-order methods step-size $1/M$ may not work.

\subsection{Anytime convergence via restarting $\AHBA$}
\label{sec:path-following}
The analysis of Algorithm \ref{alg:AHBA} is based on the a-priori fixed tolerance $\eps>0$ and the parameter coupling $\mu=\eps/\nu$. This coupling allows us to embed Algorithm \ref{alg:AHBA} within a restarting scheme featuring a decreasing sequence $\{\mu_{i}\}_{i\geq 0}$, followed by restarts of $\AHBA$. This restarting strategy frees Algorithm \ref{alg:AHBA} from hard-coded parameters and connects it well to traditional barrier methods.

To describe this double-loop algorithm, we fix $\eps_{0}>0$ and select the starting point $x_0^{0}$ as a $\nu$-analytic centre of $\setX$ with respect to $h\in\scrH_{\nu}(\setK)$. We let $i \geq 0$ denote the counter for the restarting epochs at the start of which the value $\mu_{i}$ is decreased. In epoch $i$, we generate a sequence $\{x^{k}_{i}\}_{k=0}^{K_{i}}$ by calling $\AHBA(\mu_{i},\eps_{i},L_0^{(i)},x^{0}_{i})$ until the stopping condition is reached. This will take at most $\K_{I}(\eps_{i},x^{0}_{i})$ iterations, specified in eq. \eqref{eq:FO_main_Th_compl}. We store the last iterate $\hat{x}_{i}=x^{K_{i}}_{i}$ and the last estimate of the Lipschitz modulus $\hat{M}_{i}=L_{K_i}^{(i)}$ obtained from procedure $\AHBA(\mu_{i},\eps_{i},L_0^{(i)},x^{0}_{i})$ and then restart the algorithm using the ``warm starts'' $x^{0}_{i+1}=\hat{x}_{i}$, $L_{0}^{(i+1)}=\hat{M}_{i}/2$, $\eps_{i+1}=\eps_{i}/2$, $\mu_{i+1}=\eps_{i+1}/\nu$. If $\eps \in (0,\eps_0)$ is the target accuracy of the final solution, it suffices to perform $ \lceil \log_{2}(\eps_{0}/\eps)\rceil+1$ restarts since, by construction, $\eps_{i} = \eps_0 \cdot 2^{-i}$. 
\begin{algorithm}[h]
\caption{Restarting $\AHBA$}
\label{alg:RestartHBA}
\SetAlgoLined
\KwData{ $h \in\scrH_{\nu}(\setK)$, $\eps_{0}>0$, $x_0^{0}\in\setX$ satisfying \eqref{eq:analytic_center}, $L_0^{(0)}>0$.}
\KwResult{Point $\hat{x}_i$, dual variables $\hat{y}_i$, $\hat{s}_i = \nabla f(\hat{x}_i) -\bA^{\ast}\hat{y}_i$.}
\For{$i=0,1,\ldots$} 
{ Set $\eps_{i}=2^{-i}\eps_{0}$, $\mu_{i}=\frac{\eps_i}{\nu}$\; 
 Obtain $(\hat{x}_{i},\hat{y}_{i},\hat{s}_{i},\hat{M}_{i})$ from $\AHBA(\mu_{i},\eps_i,L_0^{(i)},x^{0}_{i})$\;
 %
 Set $x_{i+1}^{0}=\hat{x}_{i}$ and $L_0^{(i+1)}=\hat{M}_{i}/2$.
	}
\end{algorithm}
\begin{theorem}\label{th:ComplexityPathfollowing}
Let Assumptions \ref{ass:1}-\ref{ass:gradLip} hold. 
Then, for any $\eps \in (0,\eps_0)$, Algorithm \ref{alg:RestartHBA} finds a $2\eps$-KKT point for problem \eqref{eq:Opt} in the sense of Definition \ref{def:eps_KKT} after
no more than $I(\eps):=\lceil \log_{2}(\eps_{0}/\eps)\rceil+1$ restarts and at most 
$
\left\lceil \frac{64}{3\eps^2}(f(x^{0})-f_{\min}(\setX)+\eps_0)\nu^2(\max\{M,L_0^{(0)}\}+\eps_0/\nu)\right\rceil
$
 iterations of $\AHBA$.
\end{theorem}
\begin{proof}
Let us consider a restart $i \geq 0$ and repeat the proof of Theorem \ref{Th:AHBA_conv} with the change $\eps \to \eps_i$,  $\mu \to \mu_i = \eps_i/\nu$, $L_0 \to L_0^{(i)}=\hat{M}_{i-1}/2$, $\bar{M}=\max\{M,L_0\} \to \bar{M}_i=\max\{M,L_0^{(i)}\}$, $x^0 \to x^{0}_{i}=\hat{x}_{i-1}$.
Let $K_i$ be the last iteration of $\AHBA(\mu_{i},\eps_i,L_0^{(i)},x^{0}_{i})$ meaning that $\norm{v^{K_i}}_{x^{K_i}} < \frac{\eps_i}{\nu}$ and $\norm{v^{K_i-1}}_{x^{K_i-1}} \geq \frac{\eps_i}{\nu}$. From the analysis following equation \eqref{eq:FO_per_iter_proof_6}, we have that
\begin{align}
&K_i \frac{\eps_i^{2}}{4 \nu^{2}(\bar{M}_i+\eps_i/\nu)} \leq K_i \min_{k=0\ldots,K_i-1} \delta_{k}^i \leq  \sum_{k=0}^{K_i-1}\delta_{k}^i\leq F_{\mu_i}(x^{0}_i)-F_{\mu_i}(x^{K_i}_i). \label{eq:PF_proof_1} 
\end{align}
Further, using the fact that $\mu_i$ is a decreasing sequence and \eqref{eq:analytic_center}, it is easy to deduce
\begin{align}
F_{\mu_{i+1}}(x^{0}_{i+1})&=F_{\mu_{i+1}}(x^{K_i}_{i})\stackrel{\eqref{eq:potential}}{=}f(x^{K_i}_{i}) + \mu_{i+1} h(x^{K_i}_{i}) \stackrel{\eqref{eq:potential}}{=}F_{\mu_{i}}(x^{K_i}_{i}) + (\mu_{i+1} - \mu_{i}) h(x^{K_i}_{i}) \notag \\
&\stackrel{\eqref{eq:analytic_center}}{\leq} F_{\mu_{i}}(x^{K_i}_{i}) + (\mu_{i+1} - \mu_{i}) (h(x_0^0)-\nu)\notag \\
& \stackrel{\eqref{eq:PF_proof_1}}{\leq}F_{\mu_i}(x^{0}_i)-K_i \frac{\eps_i^{2}}{4 \nu^{2}(\bar{M}_i+\eps_i/\nu)} + (\mu_{i+1} - \mu_{i}) (h(x_0^0)-\nu).
\label{eq:PF_proof_2} 
\end{align}
Letting $I\equiv I(\eps):=\left\lceil \log_2 (\frac{\eps_0}{\eps}) \right\rceil+1$, by Theorem \ref{Th:AHBA_conv} applied to the restart $I-1$, we see that $\AHBA(\mu_{I-1},\eps_{I-1},L_0^{(I-1)},x^{0}_{I-1})$ outputs a $2\eps$-KKT point for problem \eqref{eq:Opt} in the sense of Definition \ref{def:eps_KKT}.
Summing inequalities \eqref{eq:PF_proof_2} for all the performed restarts $i=0,...,I-1$ and rearranging the terms, we obtain
\begin{align}
\sum_{i=0}^{I-1} K_i \frac{\eps_i^{2}}{4 \nu^{2}(\bar{M}_i+\eps_i/\nu)} & \leq F_{\mu_0}(x^{0}_0) - F_{\mu_{I}}(x^{0}_{I}) + (\mu_{I} - \mu_{0}) (h(x_0^0)-\nu) \notag \\
& \stackrel{\eqref{eq:potential}}{=} f(x^{0}_0) + \mu_0 h(x^{0}_0)- f(x^{0}_{I}) - \mu_{I}h(x^{0}_{I}) + (\mu_{I} - \mu_{0}) (h(x_0^0)-\nu) \notag \\
& \stackrel{\eqref{eq:analytic_center}}{\leq} f(x^{0}_0) - f_{\min}(\setX) + \mu_0 h(x^{0}_0) - \mu_{I}h(x^{0}_{0}) + \mu_{I} \nu + (\mu_{I} - \mu_{0}) (h(x_0^0)-\nu) \notag \\ 
& \leq f(x^{0}_0) - f_{\min}(\setX) + \mu_0 \nu = f(x^{0}_0) - f_{\min}(\setX) + \eps_0.
\label{eq:PF_proof_3} 
\end{align}
Moreover, based on our updating choice $L_0^{(i+1)}=\hat{M}_{i}/2$, it holds that 
\begin{align*}
\bar{M}_i &= \max\{M,L_0^{(i)}\} = \max\{M,\hat{M}_{i-1}/2\}\\
& = \max\{M,L_{K_{i-1}}^{(i-1)}/2\} \leq \max\{M,\bar{M}_{i-1}\} \leq ... \leq \max\{M, \bar{M}_{0}\} \leq  \max\{M,L_0^{(0)}\}.
\end{align*}
Hence, 
\begin{align}
K_i \leq  4(f(x^{0}) - f_{\min}(\setX)+  \eps_0) \cdot \frac{\nu^{2}(\bar{M}_i+\eps_i/\nu)}{\eps_i^{2}} \leq \frac{C}{\eps_i^2},
\label{eq:PF_proof_4} 
\end{align}
where $C\equiv 4(f(x^{0})-f_{\min}(\setX)+\eps_0)\nu^2(\max\{M,L_0^{(0)}\}+\eps_0/\nu)$. Finally, we obtain that the total number of iterations of procedures $\AHBA(\mu_{i},\eps_{i},L_0^{(i)},x_{i}^{0}),0\leq i \leq I-1$, to reach accuracy $\eps$ is at most
\begin{align*}
\sum_{i=0}^{I-1}K_{i}&\leq \sum_{i=0}^{I-1} \frac{C}{\eps_i^2} \leq \frac{C}{\eps_0^2} \sum_{i=0}^{I-1} (2^i)^2 
\leq \frac{C}{3\eps_0^2} \cdot (4^{2+\log_2(\frac{\eps_0}{\eps})}) = \frac{16C}{3\eps^2}\\
&=\frac{64(f(x^{0})-f_{\min}(\setX)+\eps_0)\nu^2(\max\{M,L_0^{(0)}\}+\eps_0/\nu)}{3\eps^{2}}.
\end{align*}
\end{proof}

\section{A second-order Hessian-Barrier Algorithm}
\label{sec:secondorder}
%

In this section we introduce a second-order potential reduction method for problem \eqref{eq:Opt} under the assumption that the second-order Taylor expansion of $f$ on the set of feasible directions $\scrT_{x}$ defined in \eqref{eq:Dikinv} is sufficiently accurate in the geometry induced by $h \in\scrH_{\nu}(\setK)$. 
\begin{assumption}[Local second-order smoothness]
\label{ass:2ndorder}
$f:\setE\to\R\cup\{+\infty\}$ is twice continuously differentiable on $\feas$ and there exists a constant $M>0$ such that, for all $x\in\feas$ and $v\in\scrT_{x}$, we have
\begin{equation}\label{eq:SO_Lipschitz_Gradient}
\norm{\nabla f(x+v)-\nabla f(x)-\nabla^{2}f(x)v}^{\ast}_{x}\leq\frac{M}{2}\norm{v}^{2}_{x}.
\end{equation}
\end{assumption}
A sufficient condition for \eqref{eq:SO_Lipschitz_Gradient} is the following local counterpart of the global Lipschitz condition on
the Hessian of $f$:
\begin{equation}\label{eq:LipHess}
(\forall x\in\setX)(\forall u,v\in\scrF_{x}):\;\norm{\nabla^{2}f(x+u)-\nabla^{2}f(x+v)}_{\text{op},x}\leq M\norm{u-v}_{x},
\end{equation}
where 
$\norm{\BB}_{\text{op},x}\eqdef\sup_{u:\norm{u}_{x}\leq 1}\left\{\frac{\norm{\BB u}_{x}^{\ast}}{\norm{u}_{x}}\right\}$ is the induced operator norm for a linear operator $\BB:\setE\to\setE^{\ast}$. Indeed, this condition implies \eqref{eq:SO_Lipschitz_Gradient}:
\begin{align*}
&\norm{\nabla f(x+v)-\nabla f(x)-\nabla^{2}f(x)v}^{\ast}_{x}=\norm{\int_{0}^{1}(\nabla^{2}f(x+tv)-\nabla^{2}f(x))v\dif t }^{\ast}_{x}\\
&\leq \int_{0}^{1}\norm{\nabla^{2}f(x+tv)-\nabla^{2}f(x)}_{\text{op},x}\cdot \norm{v}_{x}\dif t   \leq \frac{M}{2}\norm{v}^{2}_{x}.
\end{align*}
Further, \eqref{eq:SO_Lipschitz_Gradient} in turn implies another important estimate 
\begin{equation}\label{eq:cubicestimate}
f(x+v)-\left[f(x)+\inner{\nabla f(x),v}+\frac{1}{2}\inner{\nabla^{2}f(x)v,v}\right]\leq\frac{M}{6}\norm{v}^{3}_{x}.
\end{equation}
Indeed, for all $x\in\setX$ and $v\in\scrT_{x}$,
\begin{align*}
&\abs{f(x+v)-f(x)-\inner{\nabla f(x),v}-\frac{1}{2}\inner{\nabla^{2}f(x)v,v}}=\abs{\int_{0}^{1}\inner{\nabla f(x+tv)-\nabla f(x)-\frac{1}{2}\nabla^{2}f(x)v,v}\dif t}\\
&\quad\leq \int_{0}^{1}\norm{\nabla f(x+tv)-\nabla f(x)-\frac{1}{2}\nabla^{2}f(x)v}_{x}^{*}\dif t\cdot\norm{v}_{x} 
\leq \frac{M}{6}\norm{v}^{3}_{x}.
\end{align*}
\begin{remark}
\label{Rm:Hess_Lip}
Assumption \ref{ass:2ndorder} subsumes, when $\bar{\setX}$ is bounded, the standard Lipschitz-Hessian setting since 
if the Hessian of $f$ is Lipschitz with modulus $M$ with respect to the Euclidean norm, we have by \cite[Eq. (2.2)]{NesPol06}.
\[
\norm{\nabla f(x+v)-\nabla f(x)-\nabla^{2}f(x)v}\leq\frac{M}{2}\norm{v}^{2}.
\]
Since $\bar{\setX}$ is bounded, one can observe that $\lambda_{\max}([H(x)]^{-1})^{-1}=\lambda_{\min}(H(x)) \geq \sigma$ for some $\sigma >0$, and \eqref{eq:SO_Lipschitz_Gradient} holds. Indeed, denoting $g=\nabla f(x+v)-\nabla f(x)-\nabla^{2}f(x)v$, we obtain
\[
(\norm{g}_x^*)^2 \leq \lambda_{\max}([H(x)]^{-1})\norm{g}^{2} \leq \frac{M^2}{4\lambda_{\min}(H(x))}\norm{v}^{4} \leq \frac{M^2}{4\sigma^{3}}\norm{v}_x^4.
\]
\close
\end{remark}

\begin{remark}
The cubic overestimation of the objective function in \eqref{eq:cubicestimate} does not rely on global second order differentiability assumptions. To illustrate this we invoke again the structured composite optimization problem \eqref{eq:composite}, assuming that the data fidelity function $\ell$ is twice continuously differentiable on an open neighborhood containing $\setX$, with Lipschitz continuous Hessian $\nabla^{2}\ell$ with modulus $\gamma$ w.r.t. the Euclidean norm. On the domain $\setK_{\text{NN}}$ we employ the canonical barrier $h(x)=-\sum_{i=1}^{n}\ln(x_{i})$, with $H(x)=\diag\{x_{1}^{-2},\ldots,x_{n}^{-2}\}=\XX^{-2}$. This means, for all $x,x^{+}\in\setX$, we have 
\[
\ell(x^{+})\leq\ell(x)+\inner{\nabla\ell(x),x^{+}-x}+\frac{1}{2}\inner{\nabla^{2}\ell(x)(x^{+}-x),x^{+}-x}+\frac{\gamma}{6}\norm{x^{+}-x}^{3}.
\]
As penalty function, we again consider the $L_{p}$ regularizer with $p\in(0,1)$. For any $t,s>0$, one has 
\[
t^{p}\leq s^{p}+ps^{p-1}(t-s)+\frac{p(p-1)}{2} s^{p-2}(t-s)^{2}+\frac{p(p-1)(p-2)}{6}s^{p-3}(t-s)^{3}.
\]
Since $v\in\scrT_{x}$ if and only if $v=[H(x)]^{-1/2}d=\XX d$ for some $d\in\R^{\dim(\setE)}$ satisfying $\bA\XX d=0$ and $\norm{d}<1$. Since $p(1-p)\leq 1/4$, it follows $p(1-p)(2-p)\leq 1/2$. Thus, using $x^{+}=x+v=x+\XX d$, we get 
\begin{align*}
f(x^{+}) &- \left((f(x)+\inner{\nabla f(x),\XX d}+\frac{1}{2}\inner{\nabla^{2}f(x)\XX d,\XX d}\right) \leq \frac{\gamma}{6}\norm{\XX d}^{3}+\frac{1}{12}\sum_{i=1}^{n}x^{p}_{i}d_{i}^{3}\\
&\leq \frac{\gamma}{6}\norm{\XX d}^{3}+\frac{1}{12}\norm{x}^{p}_{\infty}\sum_{i=1}^{n}d_{i}^{3} \leq \frac{1}{6}\left(\gamma\norm{x}_{\infty}^{3}+\frac{1}{2}\norm{x}^{p}_{\infty}\right)\norm{d}^{3}.
\end{align*}
Assuming that $\bar{\setX}$ is bounded, there exists a universal constant $M>0$ such that $\gamma\norm{x}_{\infty}^{2}+\frac{1}{2}\norm{x}^{p}_{\infty}\leq M$. Combining this with Remark \ref{Rm:Hess_Lip}, we obtain a cubic overestimation as in eq. \eqref{eq:cubicestimate}. Importantly, $f(x)$ is not differentiable for $x \in \{x_i=0,\text{ for some } i\}$.  
\close
\end{remark}

We emphasize that in Assumption \ref{ass:2ndorder} the constant $M$ is in general unknown or may be a conservative upper bound. Therefore, adaptive techniques should be used to estimate it and are likely to improve the practical performance of the method. Assumption \ref{ass:2ndorder} also implies, by \eqref{eq:cubicestimate} and \eqref{eq:Dbound} (with $d=v$ and $t=1< \frac{1}{\norm{v}_x} \stackrel{\eqref{eq:boundzeta}}{\leq} \frac{1}{\zeta(x,v)}$), the following upper bound for the potential function $F_{\mu}$. 
\begin{lemma}[Cubic Overestimation]
\label{lem:cubic}
For all $x\in\setX,v\in\scrT_{x}$ and $L\geq M$, we have 
\begin{equation}\label{eq:cubicdecrease}
F_{\mu}(x+v)\leq F_{\mu}(x)+\inner{\nabla F_{\mu}(x),v}+\frac{1}{2}\inner{\nabla^{2}f(x)v,v}+\frac{L}{6}\norm{v}^{3}_{x}+\mu\norm{v}^{2}_{x}\omega(\zeta(x,v)).
\end{equation}
\end{lemma}

\subsection{Algorithm description and its complexity theorem}
\label{S:SO_alg_descr}
Let $x\in\setX$ be given. In order to find a search direction, we choose a parameter $L>0$, construct a cubic-regularized model of the potential $F_{\mu}$ \eqref{eq:potential}, and minimize it on the linear subspace $\setL_{0}$:
\begin{equation}\label{eq:cubicproblem}
v_{\mu,L}(x)\in\Argmin_{v\in\setE:\bA v=0}\left\{ Q^{(2)}_{\mu,L}(x,v)\eqdef F_{\mu}(x)+\inner{\nabla F_{\mu}(x),v}+\frac{1}{2}\inner{\nabla^{2}f(x)v,v}+\frac{L}{6}\norm{v}_{x}^{3} \right\},
\end{equation}
where by $\Argmin$ we denote the set of global minimizers. The model consists of three parts: linear approximation of $h$, quadratic approximation of $f$, and a cubic regularizer with penalty parameter $L>0$. Since this model and our algorithm use the second derivative of $f$, we call it a second-order method.
Our further derivations rely on the first-order optimality conditions for the problem \eqref{eq:cubicproblem}, which say that there exists $y_{\mu,L}(x)\in\R^{m}$ such that $v_{\mu,L}(x)$ satisfies
\begin{align}
\nabla F_{\mu}(x)+\nabla^{2}f(x)v_{\mu,L}(x)+\frac{L}{2}\norm{v_{\mu,L}(x)}_{x}H(x)v_{\mu,L}(x) - \bA^{\ast} y_{\mu,L}(x) &= 0,\label{eq:opt1}\\
 - \bA v_{\mu,L}(x)&=0. \label{eq:opt2}
\end{align}
We also use the following extension of \cite[Prop. 1]{NesPol06} to our setting with the local norm induced by $H(x)$.
\begin{proposition}
For all $x\in\feas$ it holds 
\begin{equation}\label{eq:PD}
\nabla^{2}f(x)+\frac{L}{2}\norm{v_{\mu,L}(x)}_{x}H(x)\succeq 0\qquad\text{ on }\;\;\setL_{0}.
\end{equation}
\end{proposition}
\begin{proof}
The proof follows the same strategy as Lemma 3.2 in \cite{CarGouToi11a}. Let $\{z_{1},\ldots,z_{p}\}$ be an orthonormal basis of $\setL_{0}$ and the linear operator $\bZ:\R^{p}\to\setL_{0}$ be defined by $\bZ w=\sum_{i=1}^{p}z_{i}w^{i}$ for all $w=[w^{1};\ldots;w^{p}]^{\top}\in\R^{p}$. With the help of this linear map, we can absorb the null-space restriction, and formulate the search-direction finding problem \eqref{eq:cubicproblem} using the projected data
\begin{equation}\label{eq:dataKernel}
\gbold\eqdef\bZ^{\ast}\nabla F_{\mu}(x),\; \bJ\eqdef\bZ^{\ast}\nabla^{2}f(x)\bZ,\;\bH\eqdef\bZ^{\ast}H(x)\bZ \succ 0.
\end{equation}
We then arrive at the cubic-regularized subproblem to find $u_{L}\in\R^{p}$ s.t.
\begin{equation}\label{eq:cubicauxiliary}
u_{L}\in  \Argmin_{u\in\R^{p}}\{\inner{\gbold,u}+\frac{1}{2}\inner{\bJ u,u}+\frac{L}{6}\norm{u}^{3}_{\bH}\},
\end{equation}
where  $\norm{\cdot}_{\bH}$ is the norm induced by the operator $\bH$. From \cite[Thm. 10]{NesPol06} we deduce  
\[
\bJ+\frac{L\norm{u_{L}}_{\bH}}{2}\bH\succeq 0.
\]
Denoting $v_{\mu,L}(x) = \bZ u_{L}$, we see
\begin{align*}
\norm{u_{L}}_{\bH}=\inner{\bZ^{\ast}H(x)\bZ u_{L},u_{L}}^{1/2}=\inner{H(x)(\bZ u_{L}),\bZ u_{L}}^{1/2}&=\norm{v_{\mu,L}(x)}_{x}, \text{  and} \\
\bZ^{\ast}\left(\nabla^{2}f(x)+\frac{L}{2}\norm{v_{\mu,L}(x)}_{x}H(x)\right)\bZ&\succeq 0,
\end{align*}
which implies $\nabla^{2}f(x)+\frac{L}{2}\norm{v_{\mu,L}(x)}_{x}H(x)\succ 0$ over the null space $\setL_{0} = \{ v\in\setE:\bA v=0\}$. 
\end{proof}
\noindent
The above proposition gives some ideas on how one could numerically solve problem \eqref{eq:cubicproblem} in practice. In a preprocessing step, we once calculate matrix $\bZ$ and use it during the whole algorithm execution. At each iteration we calculate the new data using \eqref{eq:dataKernel}, leaving us with a standard \textit{unconstrained} cubic subproblem  \eqref{eq:cubicauxiliary}. \cite{NesPol06} show how such problems can be transformed to a \emph{convex} problem to which fast convex programming methods could in principle be applied. However, we can also solve it via recent efficient methods based on Lanczos' method \cite{CarGouToi11a,Jia22}. Whatever numerical tool is employed, we can recover our search direction by $v_{\mu,L}(x)$ by the matrix vector product $\bZ u_{L}$ in which $u_{L}$ denotes the solution obtained from this subroutine.

Our next goal is to construct an admissible step-size policy,  given the search direction $v_{\mu,L}(x)$. 
Let $x\in\setX$ be the current position of the algorithm. Define the parameterized family of arcs $x^{+}(t)\eqdef x+t v_{\mu,L}(x)$, where $t\geq 0$ is a step-size. 
By \eqref{eq:step_length_zeta} and since $v_{\mu,L}(x)\in\setL_{0}$ by \eqref{eq:opt2}, we know that $x^{+}(t)$ is in $\setX$ provided that $t\in I_{x,\mu,L}\eqdef[0,\frac{1}{\zeta(x,v_{\mu,L}(x))})$. For all such $t$, Lemma \ref{lem:cubic} yields  
\begin{equation}\label{eq:SO_potent_upper_bound}
\begin{split}
F_{\mu}(x^{+}(t))\leq F_{\mu}(x)&+t\inner{\nabla F_{\mu}(x),v_{\mu,L}(x)} + \frac{t^2}{2}\inner{\nabla^{2}f(x)v_{\mu,L}(x),v_{\mu,L}(x)} \\
&+ \frac{Mt^3}{6}\norm{v_{\mu,L}(x)}^{3}_{x}  +\mu t^{2}\omega(t\zeta(x,v_{\mu,L}(x))).
\end{split}
\end{equation}
Since $v_{\mu,L}(x) \in \setL_0$, multiplying \eqref{eq:PD} with $v_{\mu,L}(x)$ from the left and the right, and multiplying \eqref{eq:opt1} by $v_{\mu,L}(x)$ and combining with \eqref{eq:opt2}, we obtain
\begin{align}
&\inner{\nabla^{2}f(x)v_{\mu,L}(x),v_{\mu,L}(x)}\geq-\frac{L}{2}\norm{v_{\mu,L}(x)}^{3}_{x},\label{eq:descent1}\\
&\inner{\nabla F_{\mu}(x),v_{\mu,L}(x)}+\inner{\nabla^{2}f(x)v_{\mu,L}(x),v_{\mu,L}(x)}+\frac{L}{2}\norm{v_{\mu,L}(x)}^{3}_{x}=0.\label{eq:normal}
\end{align}
Under the additional assumption that $t\leq 2$ and $L\geq M$, we obtain
\begin{align*}
&t\inner{\nabla F_{\mu}(x),v_{\mu,L}(x)} + \frac{t^2}{2}\inner{\nabla^{2}f(x)v_{\mu,L}(x),v_{\mu,L}(x)} + \frac{Mt^3}{6}\norm{v_{\mu,L}(x)}^{3}_{x}\\
&\stackrel{\eqref{eq:normal}}{=} - t \left(\inner{\nabla^{2}f(x)v_{\mu,L}(x),v_{\mu,L}(x)}+\frac{L}{2}\norm{v_{\mu,L}(x)}^{3}_{x}\right) &\\
&+ \frac{t^2}{2}\inner{\nabla^{2}f(x)v_{\mu,L}(x),v_{\mu,L}(x)} + \frac{Mt^3}{6}\norm{v_{\mu,L}(x)}^{3}_{x} \\
& = \left(\frac{t^2}{2} - t \right) \inner{\nabla^{2}f(x)v_{\mu,L}(x),v_{\mu,L}(x)} - \frac{Lt}{2}\norm{v_{\mu,L}(x)}^{3}_{x} + \frac{Mt^3}{6}\norm{v_{\mu,L}(x)}^{3}_{x} \\
& \stackrel{\eqref{eq:descent1},t\leq 2}{\leq} \left(\frac{t^2}{2} - t \right) \left(- \frac{L}{2}\norm{v_{\mu,L}(x)}^{3}_{x} \right) - \frac{Lt}{2}\norm{v_{\mu,L}(x)}^{3}_{x} + \frac{Mt^3}{6}\norm{v_{\mu,L}(x)}^{3}_{x} \\
& = - \norm{v_{\mu,L}(x)}^{3}_{x} \left(\frac{Lt^2}{4} - \frac{Mt^3}{6} \right) 
\stackrel{L \geq M}{\leq} - \norm{v_{\mu,L}(x)}^{3}_{x} \frac{Lt^2}{12} \left(3 - 2t \right).
\end{align*}
Substituting this into \eqref{eq:SO_potent_upper_bound}, we arrive at
\begin{align*}
F_{\mu}(x^{+}(t))&\leq F_{\mu}(x) - \norm{v_{\mu,L}(x)}^{3}_{x} \frac{Lt^2}{12} \left(3 - 2t \right)+\mu t^{2}\omega(t\zeta(x,v_{\mu,L}(x)))\\ 
&\stackrel{\eqref{eq:omega_upper_bound}}{\leq}  F_{\mu}(x) - \norm{v_{\mu,L}(x)}^{3}_{x} \frac{Lt^2}{12} \left(3 - 2t \right)+\mu \frac{t^{2}\norm{v_{\mu,L}(x)}_{x}^{2}}{2(1-t\zeta(x,v_{\mu,L}(x))}. 
\end{align*} 
for all  $t \in I_{x,\mu,L}$. Therefore, if $t\zeta(x,v_{\mu,L}(x))\leq 1/2$, we readily see
\begin{align}
F_{\mu}(x^{+}(t))-F_{\mu}(x)&\leq - \frac{Lt^2\norm{v_{\mu,L}(x)}^{3}_{x} }{12} \left(3 - 2t \right)+\mu t^{2}\norm{v_{\mu,L}(x)}_{x}^{2} \nonumber \\
&= - \norm{v_{\mu,L}(x)}^{3}_{x} \frac{Lt^2}{12}\left(3 - 2t - \frac{12 \mu}{L\norm{v_{\mu,L}(x)}_{x}} \right) \eqdef -\eta_{x}(t).\label{eq:progressSOM}
\end{align}
Maximizing the above function $\eta_{x}(t)$ and finding a lower bound for its optimal value is technically quite challenging. Instead, we adopt the following step-size rule
\begin{equation}
\label{eq:SO_t_opt_def}
\ct_{\mu,L}(x) \eqdef \frac{1}{\max\{1,2\zeta(x,v_{\mu,L}(x))\}} = \min \left\{1, \frac{1}{2\zeta(x,v_{\mu,L}(x))}  \right\}.
\end{equation}  
Note that $\ct_{\mu,L}(x) \leq 1$ and $\ct_{\mu,L}(x)\zeta(x,v_{\mu,L}(x))\leq 1/2$. Thus, this choice of the step-size is feasible to derive \eqref{eq:progressSOM}. 

Just like Algorithm \ref{alg:AHBA}, our second-order method employs a line-search procedure to estimate the Lipschitz constant $M$ in \eqref{eq:SO_Lipschitz_Gradient}, \eqref{eq:cubicestimate} in the spirit of \cite{NesPol06,CarGouToi12}. More specifically, suppose that $x^{k}\in\setX$ is the current position of the algorithm with the corresponding initial local Lipschitz estimate $M_{k}$. To determine the next iterate $x^{k+1}$, we solve problem \eqref{eq:cubicproblem} with $L= L_k = 2^{i_k}M_{k}$ starting with $ i_k =0$, find the corresponding search direction $v^{k}=v_{\mu,L_{k}}(x^{k})$ and the new point $x^{k+1} = x^{k} + \ct_{\mu, L_{k}}(x^{k})v^{k}$. 
Then, we check whether the inequalities \eqref{eq:SO_Lipschitz_Gradient} and \eqref{eq:cubicestimate} hold with  $M=L_{k}$, $x=x^{k}$, $v = \ct_{\mu, L_{k}}(x^{k})v^{k}$, see \eqref{eq:SO_LS_2} and \eqref{eq:SO_LS_1}. 
If they hold, we make a step to $x^{k+1}$. 
Otherwise, we increase $i_k$ by 1 and repeat the procedure. Obviously, when $L_{k} = 2^{i_k}M_k \geq M$, both inequalities \eqref{eq:SO_Lipschitz_Gradient} and \eqref{eq:cubicestimate} with $M$ changed to $L_k$, i.e., \eqref{eq:SO_LS_2} and \eqref{eq:SO_LS_1}, are satisfied and the line-search procedure ends. For the next iteration we set $M_{k+1} = \max\{2^{i_k-1}M_{k},\underline{L}\}=\max\{L_{k}/2,\underline{L}\}$, so that the estimate for the local Lipschitz constant on the one hand can decrease allowing larger step-sizes, and on the other hand is bounded from below. 
The resulting procedure gives rise to a \underline{S}econd-order \underline{A}daptive \underline{H}essian-\underline{B}arrier \underline{A}lgorithm ($\SAHBA$, Algorithm \ref{alg:SOAHBA}).
\begin{algorithm}[t!]
\caption{\underline{S}econd-order \underline{A}daptive \underline{H}essian-\underline{B}arrier \underline{A}lgorithm - $\SAHBA(\mu,\eps,M_{0},x^{0})$}
\label{alg:SOAHBA}
\SetAlgoLined
\KwData{ $h \in\scrH_{\nu}(\setK)$, $\mu>0,\eps>0,M_0\geq 144  \eps,x^{0}\in\setX$.}
\KwResult{$(x^{k},y^{k-1},s^{k},M_{k})\in\setX\times\R^{m}\times\setK^{\ast}\times\R_{+}$, where $s^{k}=\nabla f(x^{k}) -\bA^{\ast}y^{k-1}$, and $M_{k}$ is the last estimate of the Lipschitz constant.}
Set $\underline{L} \eqdef 144  \eps$, $k=0$\;
\Repeat{
		$\norm{v^{k-1}}_{x^{k-1}} < \Delta_{k-1}\eqdef\sqrt{\frac{\eps}{4L_{k-1}\nu}}$ \textnormal{ and }$\|v^{k}\|_{x^{k}} < \Delta_{k}\eqdef\sqrt{\frac{\eps}{4L_{k}\nu}}$
	}{	
		Set $i_k=0$.
		
		\Repeat{
		\begin{align}
			& f(z^{k}) \leq f(x^{k}) + \inner{\nabla f(x^{k}),z^{k}-x^{k}}+\frac{1}{2}\inner{\nabla^{2}f(x^{k})(z^{k}-x^{k}),z^{k}-x^{k}} \notag \\
		&\qquad\qquad\qquad\qquad\qquad+\frac{L_{k}}{6}\norm{z^{k}-x^{k}}^{3}_{x^{k}}, \quad \textnormal{and } \label{eq:SO_LS_1}\\
		&  \norm{\nabla f(z^{k})-\nabla f(x^{k})-\nabla^{2}f(x^{k})(z^{k}-x^{k})}^{\ast}_{x^{k}}\leq\frac{L_{k}}{2}\norm{z^{k}-x^{k}}^{2}_{x^{k}}. \label{eq:SO_LS_2}
		\end{align}
		}
		{
			Set $L_k = 2^{i_k}M_k$. Find $v^k \eqdef v_{\mu,L_k}(x^{k})$ and $y^k \eqdef y_{\mu,L_k}(x^{k})$ as a global solution to
			\begin{align}
				&\hspace{-2em} \min_{v:\bA v=0} \left\{F_{\mu}(x^k)+\inner{\nabla F_{\mu}(x^k),v}+\frac{1}{2}\inner{\nabla^{2}f(x^k)v,v}+\frac{L_{k}}{6}\norm{v}_{x^k}^{3} \right\}. \label{eq:SO_finder} \\
				&\hspace{-1em} \text{Set }\;\; \alpha_k\eqdef\min \left\{1, \frac{1}{2\zeta(x^k,v^k)}  \right\},  	\text{where $\zeta(\cdot,\cdot)$ as in \eqref{eq:zeta}}. \label{eq:SO_alpha_k}
			\end{align}
					
			Set $z^{k}=x^{k} + \alpha_k v^k$, $i_k=i_k+1$\;
		}
		Set $M_{k+1} =\max\{\frac{L_{k}}{2},\underline{L}\}$, 
		$x^{k+1}=z^{k}$, $k=k+1$
	}
\end{algorithm}
Our main result on the iteration complexity of Algorithm \ref{alg:SOAHBA} is the following Theorem, whose proof is given in Section \ref{sec:ProofSOM}. 
\begin{theorem}
\label{Th:SOAHBA_conv}
Let Assumptions \ref{ass:1}, \ref{ass:barrier}, and \ref{ass:2ndorder} hold. Fix the error tolerance $\eps>0$, the regularization parameter $\mu= \frac{\eps}{4\nu}$, and some initial guess $M_0>144\eps$ for the Lipschitz constant. Let $(x^{k})_{k\geq 0}$ be the trajectory generated by $\SAHBA(\mu,\eps,M_{0},x^{0})$, where $x^{0}$ is a $4\nu$-analytic center satisfying \eqref{eq:analytic_center}.
Then the algorithm stops in no more than 
\begin{equation}
\label{eq:SO_main_Th_compl}
\K_{II}(\eps,x^{0})= \ceil[\bigg]{\frac{192 \nu^{3/2} \sqrt{2\max\{M,M_0\}}(f(x^{0}) - f_{\min}(\setX)+ \eps)}{\eps^{3/2} }}
\end{equation}
outer iterations, and the number of inner iterations is no more than $2(\K_{II}(\eps,x^{0})+1)+2\max\{\log_{2}(2M/M_{0}),1\}$. Moreover, the output of $\SAHBA(\mu,\eps,M_{0},x^{0})$ constitute an $(\eps,\frac{\max\{M,M_0\}\eps}{8\nu})$-2KKT point for problem \eqref{eq:Opt} in the sense of Definition \ref{def:eps_SOKKT}.
\end{theorem}
\begin{remark}
\label{rem:SO_complexity_simplified}
Since $f(x^{0}) - f_{\min}(\setX)$ is expected to be larger than $\eps$, and the constant $M$ is potentially large, we see that the main term in the complexity bound \eqref{eq:SO_main_Th_compl} is $O\left(\frac{\nu^{3/2}\sqrt{M}(f(x^{0}) - f_{\min}(\setX))}{\eps^{3/2}}\right)=O((\frac{\nu}{\eps})^{3/2})$.
Note that the complexity result  $O(\max\{\eps_1^{-3/2},\eps_2^{-3/2}\})$ reported in \cite{CarDucHinSid19b,CarDucHinSid19} to find an $(\eps_1,\eps_2)$-2KKT point for arbitrary $ \eps_1,\eps_2 > 0$, is known to be optimal for unconstrained smooth non-convex optimization by second-order methods under the standard Lipschitz-Hessian assumption. It can be easily obtained from our theorem by setting $\eps=\max\{\eps_1^{-3/2},\eps_2^{-3/2}\}$. 
\close
\end{remark}
%

\subsection{Proof of Theorem \ref{Th:SOAHBA_conv}}
\label{sec:ProofSOM}
The main steps of the proof are similar to the analysis of Algorithm \ref{alg:AHBA}. We start by showing the feasibility of the iterates and correctness of the line-search process. Next, we analyze the per-iteration decrease of $F_{\mu}$ and $f$ and show that if the stopping criterion does not hold at iteration $k$, then the objective function is decreased by the value $O(\eps^{3/2})$. From this, since the objective is globally lower bounded, we conclude that the algorithm stops in  $O(\eps^{-3/2})$ iterations. Finally, we show that when the stopping criterion holds, the primal-dual pair $(x^{k}$, $y^{k-1})$ resulting from solving the cubic subproblem \eqref{eq:SO_finder} yields a dual slack variable $s^{k}$ such that this triple constitutes an second-order KKT point. 

\subsubsection{Interior point property of the iterates}
By construction $x^{0}\in\setX$. Proceeding inductively, let $x^{k}\in\setX$ be the $k$-th iterate of the algorithm, with the search direction $v^{k}\equiv v_{\mu,L}(x^{k})$.  By eq. \eqref{eq:SO_alpha_k}, the step-size $\alpha_k$ satisfies $\alpha_{k}\leq \frac{1}{2\zeta(x^{k},v^{k})}$. Consequently, $\alpha_{k}\zeta(x^{k},v^{k})\leq 1/2$ for all $k\geq 0$, and using \eqref{eq:step_length_zeta} as well as $\bA v^{k} =0$, we have that $x^{k+1}=x^{k}+\alpha_{k}v^{k}\in\setX$. By induction, it follows that $x^{k}\in\setX$ for all $k\geq 0$.

\subsubsection{Bounding the number of backtracking steps}
\label{sec:backtrack2}
To bound the number of cycles involved in the line-search process for finding appropriate constants $L_{k}$, we proceed as in Section \ref{sec:backtrack1}. 
Let us fix an iteration $k$. The sequence $L_k = 2^{i_k} M_k$ is increasing as $i_k$ is increasing, and Assumption \ref{ass:2ndorder} holds. 
This implies \eqref{eq:cubicestimate}, and thus when $L_k = 2^{i_k} M_k \geq \max\{M,M_k\}$, the line-search process for sure stops since inequalities \eqref{eq:SO_LS_1} and \eqref{eq:SO_LS_2} hold.	
Hence, $L_k=2^{i_k} M_k \leq 2\max\{M,M_k\}$ must be the case, and, consequently, $M_{k+1}=\max\{L_k/2, \underline{L}\} \leq  \max\{\max\{M,M_k\}, \underline{L}\} = \max\{M,M_k\}$, which, by induction, gives $M_{k} \leq \bar{M}\equiv \max\{M,M_0\}$ and $L_k  \leq 2\bar{M}$. 
%
%
%
%
At the same time, by construction, $M_{k+1}= \max\{2^{i_k-1}M_{k},\underline{L}\} = \max\{L_k/2,\underline{L}\} \geq L_k/2 $. Hence, $L_{k+1} = 2^{i_{k+1}} M_{k+1} \geq 2^{i_{k+1}-1} L_k$ and therefore $\log_{2}\left(\frac{L_{k+1}}{L_{k}}\right)\geq i_{k+1}-1$, $\forall k\geq 0$. At the same time, at iteration $0$ we have $L_0=2^{i_0} M_0 \leq 2\bar{M}$, whence, $i_0 \leq \log_2\left(\frac{2\bar{M}}{M_0}\right)$.
Let $N(k)$ denote the number inner line-search iterations up to iteration $k$ of $\SAHBA$. Then, 
\begin{align*}
N(k)&=\sum_{j=0}^{k}(i_{j}+1)\leq i_0+1 + \sum_{j=1}^{k}\left(\log_{2}\left(\frac{L_{j}}{L_{j-1}}\right)+2\right) 
\leq 2(k+1) + 2 \log_2\left(\frac{2\bar{M}}{M_0}\right),
\end{align*}
since $L_{k} \leq 2\bar{M}= 2\max\{M,M_0\}$ in the last step. Thus, on average, the inner loop ends after two trials.


\subsubsection{Per-iteration analysis and a bound for the number of iterations}
Let us fix iteration counter $k$. The main assumption of this subsection is that the stopping criterion is not satisfied, i.e. either
$\|v^{k}\|_{x^{k}} \geq \Delta_{k}$ or $\|v^{k-1}\|_{x^{k-1}} \geq \Delta_{k-1}$. 
Without loss of generality, we assume that the first inequality holds, i.e., $\|v^{k}\|_{x^{k}} \geq \Delta_{k}$, and consider iteration $k$. Otherwise, if the second inequality holds, the same derivations can be made considering the iteration $k-1$ and using the second inequality $\|v^{k-1}\|_{x^{k-1}} \geq \Delta_{k-1}$. Thus, at the end of the $k$-th iteration 
\begin{equation}
\label{eq:SO_per_iter_proof_1}
\|v^{k}\|_{x^{k}} \geq \Delta_{k}=\sqrt{\frac{\eps}{4L_{k}\nu}}.
\end{equation}
Since the step-size $\alpha_k= \min\{1,\frac{1}{ 2\zeta(x^k,v^k)}\} = \ct_{\mu,L_k}(x^{k})$ in \eqref{eq:SO_alpha_k} satisfies $\alpha_k \leq 1$ and $\alpha_k \zeta(x^k,v^k) \leq 1/2$ (cf. \eqref{eq:SO_t_opt_def} and a remark after it), we can repeat the derivations of Section \ref{S:SO_alg_descr}, 
changing  \eqref{eq:cubicestimate} to  \eqref{eq:SO_LS_1}.
In this way we obtain the following counterpart of \eqref{eq:progressSOM} with $t=\alpha_k$, $L=L_k$, $x=x^k$, $v_{\mu,L_k}(x^{k}) \eqdef v^k$: 
\begin{align}
F_{\mu}(x^{k+1})-F_{\mu}(x^{k})&\leq - \norm{v^{k}}^{3}_{x^{k}} \frac{L_k\alpha_k^2}{12}\left(3 - 2\alpha_k - \frac{12 \mu}{L_k\norm{v^{k}}_{x^{k}}} \right)\nonumber\\
& \leq - \norm{v^{k}}^{3}_{x^{k}} \frac{L_k\alpha_k^2}{12}\left(1 - \frac{12 \mu}{L_k\norm{v^{k}}_{x^{k}}} \right)\label{eq:SO_per_iter_proof_2},
\end{align}
where in the last inequality we used that $\alpha_k \leq 1$ by construction. 
Substituting $\mu = \frac{\eps}{4\nu}$, and using \eqref{eq:SO_per_iter_proof_1}, we obtain
\begin{align*}
1 - \frac{12 \mu}{L_k\norm{v^{k}}_{x^{k}}} &= 1 - \frac{12 \eps}{4\nu L_k\norm{v^{k}}_{x^{k}}}  
\stackrel{\eqref{eq:SO_per_iter_proof_1}}{\geq} 1 - \frac{3 \eps}{\nu L_k\sqrt{\frac{\eps}{4L_{k}\nu}}} \\
&= 1 - \frac{6 \sqrt{\eps}}{ \sqrt{\nu L_k}} \geq 1 - \frac{6 \sqrt{\eps}}{ \sqrt{ 144 \nu \eps	}}  \geq \frac{1}{2},
\end{align*}
using that, by construction, $L_k =2^{i_k}M_k \geq \underline{L} = 144 \eps$ and that $\nu \geq 1$. 
Hence, from \eqref{eq:SO_per_iter_proof_2}, 
\begin{equation}
\label{eq:SO_per_iter_proof_3}
F_{\mu}(x^{k+1})-F_{\mu}(x^{k})\leq  - \norm{v^{k}}^{3}_{x^{k}} \frac{L_k\alpha_k^2}{24}.
\end{equation}
Substituting into \eqref{eq:SO_per_iter_proof_3} the two possible values of the step-size $\alpha_k$ in \eqref{eq:SO_alpha_k} gives
\begin{equation}
\label{eq:SO_per_iter_proof_8}
F_{\mu}(x^{k+1})-F_{\mu}(x^{k})\leq 
\left\{
\begin{array}{ll}
- \norm{v^{k}}^{3}_{x^{k}} \frac{L_k}{24}, & \text{if }  \alpha_k=1,\\
 -  \frac{L_k\norm{v^{k}}^{3}_{x^{k}}}{96 (\zeta(x^k,v^k))^2} \stackrel{\eqref{eq:boundzeta}}{\leq} -  \frac{L_k\norm{v^{k}}_{x^{k}}}{96}& \text{if } \alpha_k=\frac{1}{2\zeta(x^k,v^k)}.
\end{array}\right.
\end{equation}
This implies
\begin{equation}
\label{eq:SO_per_iter_proof_7}
F_{\mu}(x^{k+1})-F_{\mu}(x^{k}) \leq - \frac{L_k\norm{v^{k}}_{x^{k}}}{96} \min\left\{1, 4\norm{v^{k}}_{x^{k}}^2 \right\} \eqdef-\delta_{k}.
\end{equation}
Rearranging and summing these inequalities for $k$ from $0$ to $K-1$, and using that $L_k \geq \underline{L}$, we obtain
\begin{align}
K\min_{k=0,...,K-1} &\frac{\underline{L}\norm{v^{k}}_{x^{k}}}{96} \min\left\{1, 4\norm{v^{k}}_{x^{k}}^2\right\} \leq  \sum_{k=0}^{K-1} \delta_k 
\leq F_{\mu}(x^{0})-F_{\mu}(x^{K}) \notag \\
&\stackrel{\eqref{eq:potential}}{=} f(x^{0}) - f(x^{K}) + \mu (h(x^{0}) - h(x^{K})) \leq f(x^{0}) - f_{\min}(\setX) + \eps, \label{eq:SO_per_iter_proof_9}
\end{align}
where we used that, by the assumptions of Theorem \ref{Th:SOAHBA_conv}, $x^{0}$ is a $4\nu$-analytic center defined in \eqref{eq:analytic_center} and $\mu = \frac{\eps}{4\nu}$, implying that $h(x^{0}) - h(x^{K}) \leq 4\nu = \eps/\mu$.
Thus, up to passing to a subsequence, we have $\norm{v^{k}}_{x^{k}} \to 0$ as $k \to \infty$, which makes the stopping criterion in Algorithm \ref{alg:SOAHBA} achievable.

Assume now that the stopping criterion does not hold for $K$ iterations of $\SAHBA$. 
Then, for all $k=0,\ldots,K-1,$ it holds that 
\begin{align}
\delta_k &= \frac{L_k}{96} \min\left\{\norm{v^{k}}_{x^{k}}, 4\norm{v^{k}}_{x^{k}}^3 \right\} 
\stackrel{\eqref{eq:SO_per_iter_proof_1}}{\geq} \frac{L_k}{96} \min\left\{  \sqrt{\frac{\eps}{4L_{k}\nu}} ,  \frac{4 \eps^{3/2}}{4^{3/2}L_{k}^{3/2}\nu^{3/2}}  \right\} \notag \\
&\stackrel{L_k \leq 2\bar{M}, \nu \geq 1}{\geq}  \frac{1}{96} \min\left\{  \frac{L_k \sqrt{\eps}}{\sqrt{8\bar{M}} \nu^{3/2}} ,  \frac{ \eps^{3/2}}{2 L_{k}^{1/2}\nu^{3/2}}  \right\}  \notag \\
&\stackrel{L_k \leq 2\bar{M},L_k \geq 144  \eps}{\geq} \frac{1}{96} \min\left\{ \frac{(144 \eps) \cdot \sqrt{\eps}}{\sqrt{8\bar{M}} \nu^{3/2}} ,  \frac{ \eps^{3/2}}{\sqrt{8\bar{M}}\nu^{3/2}}  \right\}  = \frac{\eps^{3/2}}{192 \nu^{3/2} \sqrt{2\bar{M}} },
\end{align}
Thus, from \eqref{eq:SO_per_iter_proof_9}.
\begin{align*}
K \frac{\eps^{3/2}}{192 \nu^{3/2} \sqrt{2\bar{M}} } \leq f(x^{0}) - f_{\min}(\setX) + \eps. 
\end{align*}
Hence, reacalling that $\bar{M}=\max\{M_0,M\}$, $K \leq \frac{192 \nu^{3/2} \sqrt{2\max\{M_0,M\}}(f(x^{0}) - f_{\min}(\setX)+ \eps)}{\eps^{3/2} }$,
i.e. the algorithm stops for sure after no more than this number of iterations. This, combined with the bound for the number of inner steps in Section \ref{sec:backtrack2}, proves the first statement of Theorem \ref{Th:SOAHBA_conv}.

\subsubsection{Generating a $(\eps_{1},\eps_{2})$-2KKT point}
In this section, to finish the proof of Theorem \ref{Th:SOAHBA_conv}, we show that if the stopping criterion in Algorithm \ref{alg:SOAHBA} holds, i.e. $\norm{v^{k-1}}_{x^{k-1}} < \Delta_{k-1}$ and $\norm{v^{k}}_{x^{k}} < \Delta_{k}$, then the algorithm has generated an $(\eps_{1},\eps_{2})$-2KKT point of \eqref{eq:Opt} according to Definition \ref{def:eps_SOKKT}, with $\eps_{1}=\eps$ and $\eps_{2}=\frac{\max\{M_{0},\bar{M}\}\eps}{8\nu}$.

Let the stopping criterion hold at iteration $k$. Using the first-order optimality condition \eqref{eq:opt1} for the subproblem \eqref{eq:SO_finder} solved at iteration $k-1$, there exists a dual variable $y^{k-1}\in\R^{m}$ such that \eqref{eq:opt1} holds. Now, expanding the definition of the potential \eqref{eq:potential} and adding $\nabla f(x^{k})$ to both sides, we obtain
\begin{align*}
 \nabla f(x^{k}) - & \bA^{\ast}y^{k-1} + \mu \nabla h(x^{k-1}) \\
&=\nabla f(x^{k}) - \nabla f(x^{k-1}) - \nabla^2 f(x^{k-1})v^{k-1} - \frac{L_{k-1}}{2}\norm{v^{k-1}}_{x^{k-1}}H(x^{k-1})v^{k-1}.
\end{align*}
Setting $s^{k}\eqdef \nabla f(x^{k})-\bA^{\ast}y^{k-1} \in \setE^{\ast}$ and $g^{k-1}\eqdef-\mu\nabla h(x^{k-1})$, after multiplication by $[H(x^{k-1})]^{-1}$, this is equivalent to 
\begin{align*}
[H(x^{k-1})]^{-1}\left(s^{k}-g^{k-1}\right)&=[H(x^{k-1})]^{-1}\left(\nabla f(x^{k}) - \nabla f(x^{k-1}) - \nabla^2 f(x^{k-1})v^{k-1}\right)\\
& - \frac{L_{k-1}}{2}\norm{v^{k-1}}_{x^{k-1}}v^{k-1}.
\end{align*}
Multiplying both of the above equalities, we arrive at 
\begin{align*}
\left(\norm{s^{k}-g^{k-1}}^{\ast}_{x^{k-1}}\right)^{2}
&=\left( \left\|\nabla f(x^{k}) - \nabla f(x^{k-1}) - \nabla^2 f(x^{k-1})v^{k-1}  - \frac{L_{k-1}}{2}\norm{v^{k-1}}_{x^{k-1}}H(x^{k-1})v^{k-1} \right\|_{x^{k-1}}^*\right)^{2}.
\end{align*}
Taking the square root and applying the triangle inequality, we obtain
\begin{align*}
\norm{s^{k}-g^{k-1}}^{\ast}_{x^{k-1}}&\leq\norm{\nabla f(x^{k}) - \nabla f(x^{k-1}) - \nabla^2 f(x^{k-1})v^{k-1}}^{\ast}_{x^{k-1}}+\frac{L_{k-1}}{2}\norm{v^{k-1}}^{2}_{x^{k-1}}\\
&\stackrel{\eqref{eq:dualnorm},\eqref{eq:SO_LS_2},\eqref{eq:localnorm}}{\leq} \frac{L_{k-1}}{2}\norm{\alpha_{k-1}v^{k-1}}^{2}_{x_{k-1}}+\frac{L_{k-1}}{2}\norm{v^{k-1}}^{2}_{x^{k-1}}.
\end{align*}
Since the stopping criterion holds, at iteration $k-1$ we have
\begin{align}
\hspace{-1em}\zeta(x^{k-1},v^{k-1}) &\stackrel{\eqref{eq:boundzeta}}{\leq} \|v^{k-1}\|_{x^{k-1}} < \Delta_{k-1} = \sqrt{\frac{\eps}{4L_{k-1}\nu}} \leq \sqrt{\frac{\eps}{4 \cdot 144 \eps \nu}} < \frac{1}{2}\label{eq:SO_eps_KKT_proof_0},
\end{align}
where we used that, by construction, $L_{k-1} \geq \underline{L} = 144 \eps$ and that $\nu \geq 1$. Hence, by \eqref{eq:SO_alpha_k}, we have that $\alpha_{k-1}=1$ and $x^{k}=x^{k-1}+v^{k-1}$. This, in turn, implies that 
\begin{equation}
\label{eq:SO_eps_KKT_proof_1}
\norm{s^{k}-g^{k-1}}^{\ast}_{x^{k-1}}\leq  L_{k-1}\norm{v^{k-1}}^{2}_{x^{k-1}}.
\end{equation}
Now we follow the analysis of the first-order method by noting that $\norm{s^{k}-g^{k-1}}^{\ast}_{x^{k-1}}=\mu\norm{s^{k}-g^{k-1}}_{\nabla^{2}h_{\ast}(g^{k-1})}$ and $\mu=\frac{\eps}{4\nu}$, 
which implies
\begin{equation}
\label{eq:SO_eps_KKT_proof_2}
\norm{s^{k}-g^{k-1}}_{\nabla^{2}h_{\ast}(g^{k-1})}\leq\frac{L_{k-1}}{\mu}\norm{v^{k-1}}^{2}_{x^{k-1}}<\frac{L_{k-1}}{\mu}\Delta_{k-1}^{2} = \frac{L_{k-1}}{\frac{\eps}{4\nu}} \cdot \frac{\eps}{4L_{k-1}\nu} = 1.
\end{equation}
Thus, since, by \eqref{eq:relations_1}, $g^{k-1}=-\mu\nabla h(x^{k-1})\in \setK^{\ast}$, we get that $s^{k}\in\setK^{\ast}$. By construction, $x^{k}\in \setK$ and $\bA x^{k} = b$. Thus, \eqref{eq:eps_SO_optim_equality_cones} holds. We also have that, by construction, $\|\nabla f(x^{k})-\bA^{\ast}y^{k-1} - s^{k}\|=0\leq \eps$, meaning that \eqref{eq:eps_SO_optim_grad} holds with $\eps_1=\eps$. To finish the analysis of the first-order condition, it remains to check the complementarity condition \eqref{eq:eps_SO_optim_complem}. 
We have
\begin{align*}
\inner{s^{k},x^{k}}=\inner{s^{k},x^{k-1}+v^{k-1}}=\inner{s^{k},x^{k-1}}+\inner{s^{k},v^{k-1}}.
\end{align*}
We estimate each of the two terms in the r.h.s. separately. First, 
\begin{align*}
0 \leq \inner{s^{k},x^{k-1}}&
=  \inner{s^{k}-g^{k-1},x^{k-1}} + \inner{g^{k-1},x^{k-1}} \\
&\leq \norm{s^{k}-g^{k-1}}^{\ast}_{x^{k-1}}\cdot\norm{x^{k-1}}_{x^{k-1}}-\mu\inner{\nabla h(x^{k-1}),x^{k-1}} \\
&\stackrel{\eqref{eq:SO_eps_KKT_proof_1},\eqref{eq:log_hom_scb_norm_prop},\eqref{eq:log_hom_scb_hess_prop}}{\leq}  L_{k-1}\norm{v^{k-1}}^{2}_{x^{k-1}}\sqrt{\nu}+\mu\nu. 
\end{align*}
Second, 
\begin{align*}
 \inner{s^{k},v^{k-1}}&\leq \norm{s^{k}}^{\ast}_{x^{k-1}}\cdot\norm{v^{k-1}}_{x^{k-1}}\leq \left(\norm{s^{k}-g^{k-1}}^{\ast}_{x^{k-1}}+\norm{g^{k-1}}^{\ast}_{x^{k-1}}\right)\cdot \norm{v^{k-1}}_{x^{k-1}}\\
&\stackrel{\eqref{eq:SO_eps_KKT_proof_1},\eqref{eq:log_hom_scb_norm_prop},\eqref{eq:log_hom_scb_hess_prop}}{\leq}  \left( L_{k-1}\norm{v^{k-1}}^{2}_{x^{k-1}}+\mu\sqrt{\nu}\right)\Delta_{k-1}.
\end{align*}
Summing up, using the stopping criterion $\norm{v^{k-1}}_{x^{k-1}}<\Delta_{k-1}$ and that, by \eqref{eq:SO_eps_KKT_proof_0}, $\Delta_{k-1} \leq 1\leq\sqrt{\nu}$, we obtain
\begin{align}
 0 \leq \inner{s^k,x^k} = \inner{s^k,x^{k-1}+v^{k-1}} \leq  2L_{k-1}\Delta_{k-1}^2 \sqrt{\nu} + 2\mu \nu = 
2 L_{k-1} \frac{\eps}{4L_{k-1}\nu}  \sqrt{\nu} + 2\frac{\eps}{4\nu} \nu \leq  \eps, \label{eq:SO_eps_KKT_proof_3}
\end{align}
i.e., \eqref{eq:eps_SO_optim_complem} holds with $\eps_1=\eps$.\\
Finally, we show the second-order condition \eqref{eq:eps_SO_optim_SO}.
By inequality \eqref{eq:PD} for subproblem \eqref{eq:SO_finder} solved at iteration $k$, we obtain on $\setL_0$
\begin{align}
\nabla^2 f(x^{k})  &\succeq - \frac{L_{k}\norm{v^{k}}_{x^{k}}}{2} H(x^{k}) \succeq -\frac{L_{k} \Delta_k}{2} H(x^{k}) \notag \\
& =  - \frac{L_{k}}{2}\sqrt{\frac{\eps}{4L_{k}\nu}} H(x^{k}) = - \frac{\sqrt{L_k \eps}}{4\nu^{1/2}}H(x^{k})  
\succeq - \frac{ \sqrt{2\bar{M}\eps}}{4\nu^{1/2}}H(x^{k}) \label{eq:SO_eps_KKT_proof_4}
\end{align}
where we used the second part of the stopping criterion, i.e. $\norm{v^{k}}_{x^{k}}< \Delta_k$ and that $L_k \leq 2\bar{M}=2\max\{M,M_0\}$ (see Section \ref{sec:backtrack2}). Thus, \eqref{eq:eps_SO_optim_SO} holds with $\eps_2=\frac{\max\{M,M_0\}\eps}{8\nu}$, which finishes the proof of Theorem \ref{Th:SOAHBA_conv}.

\subsection{Discussion}
\label{sec:SO_discussion}
\paragraph{Strengthened KKT condition.}
As in Section \ref{sec:FO_discussion}, our aim in this section is to compare our result with those available in the contemporary literature. We therefore onsider the special case $\bar{\setK}=\bar{\setK}_{\text{NN}}$, endowed with the standard log-barrier $h(x)=-\sum_{i=1}^n \ln(x_i)$. Recall that for this barrier setup we have $\nabla h(x)=[-x_{1}^{-1},\ldots,-x_{n}^{-1}]^{\top}$, $H(x)=\diag[x_{1}^{-2},\ldots,x_{n}^{-2}]=\XX^{-2}$. Assume that the stopping criterion applies at iteration $k$. Using the first-order optimality condition \eqref{eq:opt1} for the subproblem \eqref{eq:SO_finder} solved at iteration $k-1$ and expanding the definition of the potential \eqref{eq:potential}, there exists a dual variable $y^{k-1}\in\R^{m}$ such that \eqref{eq:opt1} holds, i.e., 
\begin{align*}
 \nabla f(x^{k-1}) + \mu \nabla h(x^{k-1})  + \nabla^2 f(x^{k-1})v^{k-1} - \bA^{\ast}y^{k-1} =  - \frac{L_{k-1}}{2}\norm{v^{k-1}}_{x^{k-1}}H(x^{k-1})v^{k-1}.
\end{align*}
Multiplying both sides by $H(x^{k-1})^{-1/2}$, using the stopping criterion $\norm{v^{k-1}}_{x^{k-1}}<\sqrt{\frac{\eps}{4\nu L_{k-1}}}$, since $H(x^{k-1})^{-1/2}\nabla h(x^{k-1})= - \1_{n}$ and $\nu=n$, we obtain
\begin{align}
&\norm{\XX^{k-1}(\nabla^2 f(x^{k-1})v^{k-1} + \nabla f(x^{k-1})-\bA^{\ast}y^{k-1}) - \mu \1_{n}}_{\infty} \notag \\
& \leq \norm{\XX^{k-1} (\nabla^2 f(x^{k-1})v^{k-1} + \nabla f(x^{k-1})-\bA^{\ast}y^{k-1}) - \mu \1_{n}  } = \frac{L_{k-1}}{2} \norm{-\XX^{k} v^{k-1}}^2\notag\\
& < \frac{\eps}{8n}. \label{eq:SO_remarks_1}
\end{align}
Whence, since $\mu=\frac{\eps}{4n}$, the above bound \eqref{eq:SO_remarks_1} combined with the triangle inequality yields
\begin{align}
& \norm{\XX^{k-1}(\nabla^2 f(x^{k-1})v^{k-1} +\nabla f(x^{k-1})-\bA^{\ast}y^{k-1})}_{\infty} \notag \\
&\leq \norm{\XX^{k-1} (\nabla^2 f(x^{k-1})v^{k-1} + \nabla f(x^{k-1})-\bA^{\ast}y^{k-1}) - \mu \1_{n}  } +\norm{\mu\1_{n}} \notag\\
&=\frac{L_{k-1}}{2} \norm{-\XX^{k} v^{k-1}}^2+\norm{\mu\1_{n}}<\frac{3\eps}{8n}. \label{eq:SO_remarks_2}
\end{align}
Let $\mathbf{V}^{k-1} = \diag(v^{k-1})$. Using the fact that $x^{k}=x^{k-1}+v^{k-1}$ shown after \eqref{eq:SO_eps_KKT_proof_0}, we obtain
\begin{align*}
& \norm{\XX^{k}(\nabla f(x^{k})-\bA^{\ast}y^{k-1})}_{\infty} \notag \\
& = \norm{(\XX^{k-1}+\mathbf{V}^{k-1})(\nabla^2 f(x^{k-1})v^{k-1} +\nabla f(x^{k-1})-\bA^{\ast}y^{k-1} + \nabla f(x^{k}) - \nabla f(x^{k-1}) - \nabla^2 f(x^{k-1})v^{k-1})}_{\infty} \notag  \\
& \leq \norm{\XX^{k-1}(\nabla^2 f(x^{k-1})v^{k-1} +\nabla f(x^{k-1})-\bA^{\ast}y^{k-1} )}_{\infty}\\ 
& \hspace{1em} + \norm{\XX^{k-1} (\nabla f(x^{k}) - \nabla f(x^{k-1}) - \nabla^2 f(x^{k-1})v^{k-1})}_{\infty}\\ 
& \hspace{1em} + \norm{\mathbf{V}^{k-1}(\nabla^2 f(x^{k-1})v^{k-1} +\nabla f(x^{k-1})-\bA^{\ast}y^{k-1} )}_{\infty}\\ 
& \hspace{1em} + \norm{ \mathbf{V}^{k-1}(\nabla f(x^{k}) - \nabla f(x^{k-1}) - \nabla^2 f(x^{k-1})v^{k-1})}_{\infty}\\ 
&=I+II+III+IV.
\end{align*}
Let us estimate each of the four terms $I-IV$, using two technical facts \eqref{eq:technical_1}, \eqref{eq:technical_2} proved in Appendix \ref{sec:Appendix}. We have:
\begin{align*}
 I& \stackrel{\eqref{eq:SO_remarks_2}}{<} \frac{3\eps}{8n}, \\
II& \leq \norm{\XX^{k-1} (\nabla f(x^{k}) - \nabla f(x^{k-1}) - \nabla^2 f(x^{k-1})v^{k-1})}\\
& = \norm{\nabla f(x^{k}) - \nabla f(x^{k-1}) - \nabla^2 f(x^{k-1})v^{k-1}}_{x^{k-1}}^* \stackrel{\eqref{eq:SO_LS_2}}{\leq} \frac{L_{k-1}}{2}\norm{v^{k-1}}_{x^{k-1}}^2<\frac{\eps}{8n}, \\
III&  \stackrel{\eqref{eq:technical_2}}{\leq} \norm{v^{k-1}}_{x^{k-1}} \cdot \norm{\XX^{k-1} (\nabla^2 f(x^{k-1})v^{k-1} +\nabla f(x^{k-1})-\bA^{\ast}y^{k-1} )}_{\infty}  \stackrel{\eqref{eq:SO_eps_KKT_proof_0},\eqref{eq:SO_remarks_2}}{<}\frac{3\eps}{8n},
\end{align*}
where we have used $x^{k}=z^{k-1}=x^{k-1}+v^{k-1}$ in bounding $II$, and the last bound for expression $III$ uses $\norm{v^{k-1}}_{x^{k-1}}<1$, which is implied by eq. \eqref{eq:SO_eps_KKT_proof_0}. Finally, we also obtain
\begin{align*}
IV& \stackrel{\eqref{eq:technical_1}}{\leq} \norm{v^{k-1}}_{x^{k-1}} \cdot \norm{\nabla f(x^{k}) - \nabla f(x^{k-1}) - \nabla^2 f(x^{k-1})v^{k-1}}_{x^{k-1}}^{*}\\
&\stackrel{\eqref{eq:SO_eps_KKT_proof_0},\eqref{eq:SO_LS_2}}{\leq} \frac{L_{k-1}}{2}\norm{v^{k-1}}_{x^{k-1}}^2 <\frac{\eps}{8n}.
\end{align*}
Summarizing, we arrive at
\begin{equation}
\label{eq:SO_remarks_7}
\norm{\XX^{k}(\nabla f(x^{k})-\bA^{\ast}y^{k-1})}_{\infty} \leq \frac{\eps}{n}. 
\end{equation}
Further, by Theorem \ref{Th:SOAHBA_conv}, we have that $\nabla f(x^{k})-\bA^{\ast}y^{k-1} = s^k \in \setK^{\ast}_{\text{NN}}=\Rn_{++}$, and
\[
\nabla^2f(x^k)  + H(x^k) \sqrt{\frac{M\eps}{n}}  \succeq 0 \;\; \text{on} \;\; \setL_0.
\] 
By Remark \ref{rem:SO_complexity_simplified}, these inequalities are achieved after $O\left(\frac{\sqrt{M} n^{3/2}(f(x^{0}) - f_{\min}(\setX))}{\eps^{3/2}}\right)$ iterations. Assuming that $M\geq 1$, if we change $\eps \to \tilde{\eps}=\min\{n\eps, n\eps/M\}$, we obtain from these inequalities that in 
$O\left(\frac{\sqrt{M} n^{3/2}(f(x^{0}) - f_{\min}(\setX))}{\tilde{\eps}^{3/2}}\right) = O\left(\frac{M^2 (f(x^{0}) - f_{\min}(\setX))}{\eps^{3/2}}\right)$ iterations $\SAHBA$ guarantees
\begin{align*}
& x^{k} > 0, \; \nabla f(x^{k})-\bA^{\ast}y^{k-1} >0 \\
&\norm{\XX^{k}(\nabla f(x^{k})-\bA^{\ast}y^{k-1})}_{\infty} \leq \frac{\tilde{\eps}}{n} \leq \eps, \\
& \nabla^2f(x^k)  + H(x^k) \sqrt{\eps}  \succeq \nabla^2f(x^k)  + H(x^k) \sqrt{\frac{M\tilde{\eps}}{n}}  \succeq 0 \;\; \text{on} \;\; \setL_0.
\end{align*}
In contrast, the second-order algorithm of \cite{HaeLiuYe18} requires an additional assumption that the level set of the objective $f$ is bounded in the $L_{\infty}$-norm, gives a slightly worse guarantee $\nabla f(x^{k})-\bA^{\ast}y^{k-1} > -\eps$, and requires a larger number of iterations $O\left(\frac{ \max\{M,R\}^{7/2}(f(x^{0}) - f_{\min}(\setX))}{\eps^{3/2}}\right)$ ($R$ denoting the $L_{\infty}$ upper bound of the level set corresponding to $x^{0}$).  We also can repeat the same remark as in Section \ref{sec:FO_discussion} that our measure of complementarity $0\leq \inner{s^{k},x^{k}}\leq \eps$ is stronger than $\max_{1\leq i\leq n}\abs{x_{i}^{k}s_{i}^{k}}$ used in \cite{HaeLiuYe18,NeiWr20}. Furthermore, our algorithm is applicable to general cones admitting an efficient barrier setup, rather than only for $\bar{\setK}_{\text{NN}}$. For more general cones we can not use the coupling $H(x)^{-\frac{1}{2}} = \XX$, which was seen to be very helpful in the derivations of the bound \eqref{eq:SO_remarks_7} above. Thus, to deal with general cones, we had to find and exploit suitable properties of the barrier class $\scrH_{\nu}(\setK)$ and develop a new analysis technique that works for general, potentially non-symmetric, cones. Finally, our method does not rely on the trust-region techniques as in \cite{HaeLiuYe18} that may slow down the convergence in practice since the radius of the trust region is no grater than $O(\sqrt{\eps})$ leading to short steps.

\paragraph{Exploiting problem structure.}
We note that in \eqref{eq:SO_per_iter_proof_8} we can clearly observe the benefit of the use of $\nu$-SSB in our algorithm. When $\alpha_k=\frac{1}{2\zeta(x^k,v^k)}$, the per-iteration decrease of the potential is  $\frac{L_k\norm{v^{k}}^{3}_{x^{k}}}{96 (\zeta(x^k,v^k))^2} \geq \frac{ \sqrt{\eps L_k} \norm{v^{k}}_{x^{k}}^2}{96 \sqrt{4\nu} (\zeta(x^k,v^k))^2} $ which may be large if $\zeta(x^k,v^k)=\sigma_{x^k}(-v^k) \ll \norm{v^{k}}_{x^{k}}$.

\paragraph{Dependence on parameters.}
Next, we discuss more explicitly, how the algorithm and complexity bounds depend on the parameter $\mu$. The first observation is that from \eqref{eq:SO_eps_KKT_proof_2}, to guarantee that $s^{k} \in \setK^{\ast}$, we need the stopping criterion to be $\norm{v^{k-1}}_{x^{k-1}} < \Delta_{k-1}= \sqrt{\mu/L_{k-1}}$, which by \eqref{eq:SO_eps_KKT_proof_3} leads to the error $4 \mu \nu$ in the complementarity conditions and by \eqref{eq:SO_eps_KKT_proof_4} leads to the error $\sqrt{\mu/\bar{M}}$ in the second-order condition. From the analysis following equation \eqref{eq:SO_per_iter_proof_8}, we have that  
\[
K\frac{\mu^{3/2}}{24  \sqrt{\bar{M}}} 
\leq f(x^{0})-f_{\min}(\setX)+\mu \nu.
\]
Whence, recalling that $\bar{M}=\max\{M,M_0\}$,
\[
K \leq 24(f(x^{0}) - f_{\min}(\setX)+ \mu \nu) \cdot \frac{\sqrt{2\max\{M,M_0\}}}{\mu^{3/2}}.
\]
Thus, we see that after $O(\mu^{-3/2})$ iterations the algorithm finds a $(4 \mu \nu,\mu/\bar{M})$-KKT point, and  if $\mu \to 0$, we have convergence to a KKT point, but the complexity bound tends to infinity and becomes non-informative. At the same time, as it is seen from \eqref{eq:SO_finder}, when $\mu \to 0$, the algorithm resembles a cubic-regularized Newton method, but with the regularization with the cube of the local norm. We also see from the above explicit expressions in terms of $\mu$ that the design of the algorithm requires careful balance between the desired accuracy of the approximate KKT point expressed mainly by the complementarity conditions, stopping criterion, and complexity. Moreover, the step-size must be selected carefully to ensure the feasibility of the iterates.

\subsection{Anytime convergence via restarting $\SAHBA$}
Similarly to the restarted $\AHBA$ (Algorithm \ref{alg:RestartHBA}), we can obtain anytime convergence envoking a restarted method that uses $\SAHBA$ as an inner procedure. We fix $\eps_{0}>0$ and select the starting point $x_0^{0}$ as a $4\nu$-analytic center of $\setX$ in the sense of eq. \eqref{eq:analytic_center}. In epoch $i\geq 0$ we generate a sequence 
$\{x_{i}^{k}\}_{k=0}^{K_{i}}$  by calling $\SAHBA(\mu_{i},\eps_{i},M_0^{(i)},x^{0}_{i})$ with $\mu_{i}=\frac{\eps_{i}}{4\nu}$ until the stopping condition is reached. We know that this inner procedure terminates after at most $\K_{II}(\eps_{i},x^{0}_{i})$ iterations. Store the values $x^{K_{i}}_{i}$ and $M_{K_{i}}^{(i)}$, and set $x^{i+1}_{0}\equiv x^{K_{i}}_{i}$, as well as $M_{0}^{(i+1)}\equiv M_{K_{i}}^{(i)}/2$. Updating the parameters to $\mu_{i+1}$ and $\eps_{i+1}$, we restart by calling procedure $\SAHBA(\mu_{i+1},\eps_{i+1},M_0^{(i+1)},x^{0}_{i+1})$ anew. This is formalized in Algorithm \ref{alg:RestartSAHBA}. 
\begin{algorithm}[h!]
\caption{Restarting $\SAHBA$}
\label{alg:RestartSAHBA}
\SetAlgoLined
\KwData{ $h \in\scrH_{\nu}(\setK)$, $\eps_{0}>0$, $x_0^{0}\in\setX$ -- $4\nu$-analytic center, $M_0^{(0)}\geq 144 \eps_0$.}
\KwResult{Point $\hat{x}_i$, dual variables $\hat{y}_i$, $\hat{s}_i = \nabla f(\hat{x}_i) -\bA^{\ast}\hat{y}_i$.}
\For{$i=0,1,\ldots$} 
{ Set $\eps_{i}=2^{-i}\eps_{0}$, $\mu_{i}=\frac{\eps_i}{4\nu}$\; 
Obtain $(\hat{x}_{i},\hat{y}_{i},\hat{s}_{i},\hat{M}_{i})$ from $\SAHBA(\mu_{i},\eps_i,M_0^{(i)},x^{0}_{i})$\;
 %
 Set $x_{i+1}^{0}=\hat{x}_{i}$ and $M_0^{(i+1)}=\hat{M}_{i}/2$.
	}
\end{algorithm}
\begin{theorem}\label{th:ComplexityPathfollowingSAHBA}
Let Assumptions \ref{ass:1}, \ref{ass:barrier}, \ref{ass:2ndorder} hold. 
Then, for any $\eps \in (0,\eps_0)$, Algorithm \ref{alg:RestartSAHBA} finds an $(\eps,\frac{\max\{M,M_0^{(0)}\}\eps}{8\nu})$-2KKT point for problem \eqref{eq:Opt} in the sense of Definition \ref{def:eps_SOKKT} after no more than $I(\eps)\eqdef\lceil \log_{2}(\eps_{0}/\eps)\rceil+1$ restarts and at most 
$
\left\lceil 841(f(x^{0})-f_{\min}(\setX)+\eps_0)\nu^{3/2}\eps^{-3/2}\sqrt{2\max\{M,M_0^{(0)}\}}\right\rceil
$
 iterations of $\SAHBA$.
\end{theorem}
\begin{proof}
Let us consider a restart $i \geq 0$ and mimic the proof of Theorem \ref{Th:SOAHBA_conv} with the substitution $\eps \to \eps_i$,  $\mu \to \mu_i = \eps_i/(4\nu)$, $M_0 \to M_0^{(i)}=\hat{M}_{i-1}/2$, $\underline{L} = 144  \eps \to \underline{L}_i = 144  \eps_i$, $\bar{M}=\max\{M,M_0\} \to \bar{M}_i=\max\{M,M_0^{(i)}\}$, $x^0 \to x^{0}_{i}=\hat{x}_{i-1}$. Note that $M_0^{(i)} \geq 144 \eps_i=\underline{L}_i$ for $i\geq 0$. We verify this via induction. By construction $M_0^{(0)}\geq 144 \eps_0$. Assume the bound holds for some $i\geq 1$. Then,  $M_0^{(i+1)}=M_{K_{i}}^{(i)}/2=\max\{L_{K_{i}-1}^{(i)}/2,\underline{L}_{i}\}/2\geq 144 \eps_{i}/2=144 \eps_{i+1}$, where we have used the induction hypothesis and the definition of the sequence $\eps_{i}$.\\
Let $K_i$ be the last iteration of $\SAHBA(\mu_{i},\eps_i,M_0^{(i)},x^{0}_{i})$ meaning that the stopping criterion does not hold at the inner iterations $k=0,\ldots,K_i-1$. From the analysis following equation \eqref{eq:SO_per_iter_proof_7}, we obtain
\begin{equation} \label{eq:SO_PF_proof_1} 
K_i \frac{\eps_i^{3/2}}{192 \nu^{3/2} \sqrt{2\bar{M}_i}} \leq F_{\mu_i}(x^{0}_i)-F_{\mu_i}(x^{K_i}_i).
\end{equation}
Using that $\mu_i$ is a decreasing sequence and $x_{0}^{0}$ is a $4\nu$-analytic center, we see
\begin{align}
F_{\mu_{i+1}}(x^{0}_{i+1})&=F_{\mu_{i+1}}(x^{K_i}_{i})=f(x^{K_i}_{i}) + \mu_{i+1} h(x^{K_i}_{i})=F_{\mu_{i}}(x^{K_i}_{i}) + (\mu_{i+1} - \mu_{i}) h(x^{K_i}_{i}) \notag \\
&\stackrel{\eqref{eq:analytic_center}}{\leq} F_{\mu_{i}}(x^{K_i}_{i}) + (\mu_{i+1} - \mu_{i}) (h(x_0^0)-4\nu)\notag \\
& \stackrel{\eqref{eq:SO_PF_proof_1}}{\leq}F_{\mu_i}(x^{0}_i)-K_i \frac{\eps_i^{3/2}}{192 \nu^{3/2} \sqrt{2\bar{M}_i} }  + (\mu_{i+1} - \mu_{i}) (h(x_0^0)-4\nu).
\label{eq:SO_PF_proof_2} 
\end{align}
Let $I=I(\eps)=\left\lceil \log_2 \frac{\eps_0}{\eps} \right\rceil+1$. By Theorem \ref{Th:SOAHBA_conv} applied to the restart $I-1$, we see that $\SAHBA(\mu_{I-1},\eps_{I-1},M_0^{(I-1)},x^{0}_{I-1})$ outputs an $(\eps_{I-1},\frac{\bar{M}_{I-1}\eps_{I-1}}{8\nu})$-2KKT point for problem \eqref{eq:Opt} in the sense of Definition \ref{def:eps_SOKKT}. Since $\eps_{I-1}=\eps$ and, for all $i\geq 1$,
\begin{align}
\bar{M}_i &= \max\{M,M_0^{(i)}\} = \max\{M,\hat{M}_{i-1}/2\} = \max\{M,M_{K_{i-1}}^{(i-1)}/2\} \notag\\
&= \max\{M,\max\{L_{K_{i-1}-1}^{(i-1)}/2,\underline{L}_{i-1}\}/2\} \notag\\
&\leq  \max\{M,\max\{\bar{M}_{i-1},M_0^{(i-1)}\}/2\} \leq  \max\{M,\bar{M}_{i-1}\} \notag\\
&\leq ... \leq \max\{M,\bar{M}_{0}\} \leq  \max\{M,M_0^{(0)}\}.\label{eq:SO_PF_proof_3} 
\end{align}
it follows that actually we generate an $(\eps,\frac{\max\{M,M_{0}^{(0)}\}\eps}{8\nu})$-2KKT point. Summing inequalities \eqref{eq:SO_PF_proof_2} for all the performed restarts $i=0,...,I-1$ and rearranging the terms, we obtain
\begin{align*}
\sum_{i=0}^{I-1} K_i \frac{\eps_i^{3/2}}{192 \nu^{3/2} \sqrt{2\bar{M}_i} }  & \leq F_{\mu_0}(x^{0}_0) - F_{\mu_{I}}(x^{0}_{I}) + (\mu_{I} - \mu_{0}) (h(x_0^0)-4\nu) \notag \\
&=f(x^{0}_0) + \mu_0 h(x^{0}_0)- f(x^{0}_{I}) - \mu_{I}h(x^{0}_{I}) + (\mu_{I} - \mu_{0}) (h(x_0^0)-4\nu) \notag \\
& \stackrel{\eqref{eq:analytic_center}}{\leq} f(x^{0}_0) - f_{\min}(\setX) + \mu_0 h(x^{0}_0) - \mu_{I}h(x^{0}_{0}) + 4\mu_{I} \nu + (\mu_{I} - \mu_{0}) (h(x_0^0)-4\nu) \notag \\ 
& \leq f(x^{0}_0) - f_{\min}(\setX) + 4\mu_0 \nu = f(x^{0}_0) - f_{\min}(\setX) + \eps_0,
\end{align*}
where in the last steps we have used the coupling $\mu_{0}=\eps_{0}/\nu$. From this inequality, using \eqref{eq:SO_PF_proof_3}, we obtain
\begin{align}
K_i \leq  (f(x^{0}) - f_{\min}(\setX)+  \eps_0) \cdot \frac{192\nu^{3/2}\sqrt{2\bar{M}_i}}{\eps_i^{3/2}} \leq \frac{C}{\eps_i^{3/2}},
\label{eq:SO_PF_proof_4} 
\end{align}
where $C \equiv 192(f(x^{0})-f_{\min}(\setX)+\eps_0)\nu^{3/2}\sqrt{2\max\{M,M_0^{(0)}\}}$. Finally, we obtain that the total number of iterations of procedures $\SAHBA(\mu_{i},\eps_{i},M_0^{(i)},x_{i}^{0}),0\leq i \leq I-1$, to reach accuracy $\eps$ is at most
\begin{align*}
\sum_{i=0}^{I-1}K_{i}&\leq \sum_{i=0}^{I-1} \frac{C}{\eps_i^{3/2}} \leq \frac{C}{\eps_0^{3/2}} \sum_{i=0}^{I-1} (2^i)^{3/2} \\
&\leq \frac{C}{\eps_0^{3/2}} \cdot \frac{2^{3/2\cdot(2+\log_2(\frac{\eps_0}{\eps}))}-1}{2^{3/2}-1}\leq \frac{8C}{(\sqrt{8}-1)\eps^{3/2}}\\
&<\frac{841(f(x^{0})-f_{\min}(\setX)+\eps_0)\nu^{3/2}\sqrt{2\max\{M,M_0^{(0)}\}}}{\eps^{3/2}}.
\end{align*} 
\end{proof}

\section{Conclusion}
\label{sec:conclusion}
%

We derived Hessian-barrier algorithms based on first- and second-order information on the objective $f$. We performed a detailed analysis of their worst-case iteration complexity in order to find a suitably defined approximate KKT point. Under weak regularity assumptions and in presence of general conic constraints, our Hessian-barrier algorithms share the best known complexity rates in the literature for first- and second-order  approximate KKT points. Our methods are characterized by a decomposition approach of the feasible set which leads to numerically efficient subproblems at each their iteration. Several open questions for the future remain. First, our iterations assume that the subproblems are solved exactly, and for practical reasons this should be relaxed. Second, we mentioned that $\AHBA$ can be interpreted as a discretization of the Hessian-barrier gradient system \cite{ABB04}, but the exact relationship is not explored yet. This, however, could be an important step towards understanding acceleration techniques of $\AHBA$, akin to accelerated methods for the cubic regularized Newton method. Furthermore, the cubic-regularized version has no corresponding continuous-time version yet. It will be very interesting to investigate this question further. Additionally, the question of convergence of the trajectory $(x^{k})_{k\geq 0}$ generated by either scheme is open. Another interesting direction for future research would be to allow for  higher-order Taylor expansions in the subproblems in order to boost convergence speed further, similar to \cite{CarGouToi19}. 

\section*{Acknowledgments}
We would like to thank Yurii Nesterov, Anton Rodomanov, Nikita Doikov, Giovanni Grapiglia and Radu Dragomir for fruitful discussions that allowed to improve the quality of the paper. M. Staudigl acknowledges financial support from the COST Action CA16228 "European Network for Game Theory".

\appendix
\section{More results on Self-concordant barriers}
\label{app:barrier}

The \emph{dual cone} $\bar{\setK}^{\ast}$ is defined as $\bar{\setK}^{\ast}\eqdef\{s\in\setE^{\ast}\vert\inner{s,x}\geq 0\;\forall x\in\bar{\setK}\}$, and the \emph{dual barrier} $h_{\ast}(s)\eqdef\sup_{x\in\setK}\{\inner{-s,x}-h(x)\}$ for $s\in\bar{\setK}^{\ast}$. From \cite[Thm 3.3.1]{Ren01} we know that if $h\in\scrH_{\nu}(\setK)$, then $h_{\ast}\in\scrH_{\nu}(\setK^{\ast})$. Moreover, 
\begin{align}
&x \in \setK \Rightarrow -\nabla h(x) \in \setK^{\ast},\label{eq:relations_1}\\
&s=-\nabla h(x)\iff \nabla h_{\ast}(s)=-x\Rightarrow \nabla^{2}h_{\ast}(s)=[\nabla^{2}h(x)]^{-1}. \label{eq:relations}
\end{align}
We will also need the following properties listed in \cite[Lemma 5.4.3]{Nes18}.
\begin{proposition}
\label{prop:logSCB}
Let $h\in\scrH_{\nu}(\setK)$, $x\in\setK$, $t>0$ and recall that $H(x)=\nabla^{2}h(x)$.
Then,
\begin{align}
& \nabla^2 h(t x) = t^{-2} \nabla^2 h(x), \label{eq:log_hom_scb_hess_homog_prop}\\ 
&-\inner{\nabla h(x),x} = \nu, \label{eq:log_hom_scb_hess_prop}\\
&\norm{x}_{x}^2 = \inner{ H(x) x ,x } = \nu, \quad \inner{ \nabla h(x), [H(x)]^{-1}\nabla h(x) } = \nu. \label{eq:log_hom_scb_norm_prop}
\end{align}
\end{proposition}

\section{Useful inequalities}
\label{sec:Appendix}

Consider the cone $\setK_{\text{NN}}$ with the standard log-barrier $h(x)=-\sum_{i=1}^n \ln(x_i)$ which has Hessian $H(x)=\diag[x_{1}^{-2},\ldots,x_{n}^{-2}]=\XX^{-2}$. 
Let $\mathbf{V}=\diag[v_{1},\ldots,v_{n}]=\diag(v)$, $z \in \R^n$, and $x\in\setK_{\text{NN}}$. Then, 
\begin{align}
&\norm{\mathbf{V}z}_{\infty} \leq \norm{\mathbf{V}z} \leq \norm{v}_{x} \cdot \norm{z}_x^* \label{eq:technical_1}\\
& \norm{\mathbf{V}z}_{\infty} \leq \norm{v}_{x} \cdot \norm{\XX z}_{\infty}. \label{eq:technical_2}
\end{align}
The first inequality in \eqref{eq:technical_1} is trivial. Let us prove the second inequality.
Indeed, we have
\begin{align*}
\norm{\mathbf{V}z}^2 &= \sum_{i=1}^n (v_iz_i)^2 = \sum_{i=1}^n (v_i/x_i)^2\cdot (x_iz_i)^2 \leq \left(\sum_{i=1}^n (v_i/x_i)^2 \right)\cdot \left(\sum_{i=1}^n(x_iz_i)^2 \right) \\
& = \inner{H(x)v,v} \cdot \inner{[H(x)]^{-1}z,z} = \norm{v}_{x}^2 \cdot (\norm{z}_x^*)^2,
\end{align*}
which finishes the proof of \eqref{eq:technical_1}. 
For the inequality \eqref{eq:technical_2}, we have, donoting by $v/x$ the componentwise division of $v$ by $x$,
\begin{align*}
\norm{\mathbf{V}z}_{\infty} &= \norm{\mathbf{V}\XX^{-1}\XX z}_{\infty} \leq \norm{v/x}_{\infty} \cdot \norm{\XX z}_{\infty} \leq \norm{v/x} \cdot\norm{\XX z}_{\infty}= \norm{\XX^{-1}v} \cdot\norm{\XX z}_{\infty} = \norm{H(x)^{1/2}v} \cdot\norm{\XX z}_{\infty}\\
& = \norm{v}_x \cdot \norm{\XX z}_{\infty}
\end{align*}

\bibliographystyle{plainnat}
\bibliography{mybib}
\end{document}